\newcommand{\mass}{\mu_{z,\z}}
\DeclareMathOperator{\dist}{dist}
\DeclareMathOperator{\diam}{diam}
\renewcommand{\Box}{\operatorname{Box}}
\renewcommand{\Im}{\operatorname{Im}}
\renewcommand{\epsilon}{\varepsilon}
\renewcommand{\phi}{\varphi}
\newcommand{\step}[1]{\par\medskip\noindent\it#1\rm}
\newcommand{\scalar}[2]{\langle#1,#2\rangle}
\newcommand{\norm}[1]{\left\Vert#1\right\Vert}
\newcommand{\abs}[1]{\lvert#1\rvert}
\newcommand{\z}{\zeta}
\newcommand{\g}{\gamma}
\newcommand{\wh}{\widehat}
\newcommand{\e}{\varepsilon}
\renewcommand{\r}{\varrho}
\renewcommand{\rho}{\varrho}
\newcommand{\s}{\sigma}
\newcommand{\la}{\lambda}
\newcommand{\ol}{\overline}
\renewcommand{\d}{\delta}
\newcommand{\Eucl}{\textup{Euc}}
\newcommand{\p}{\partial}
\newcommand{\R}{\mathbb{R}}
\newcommand{\N}{\mathbb{N}}
\newcommand{\C}{\mathbb{C}}
\renewcommand{\d}{\delta}
\renewcommand{\t}{\tau}
\renewcommand{\a}{\alpha}
\renewcommand{\b}{\beta}
\DeclareMathOperator{\Span}{span}
\numberwithin{equation}{section}
\let\oldbibliography\thebibliography
\renewcommand{\thebibliography}[1]{%
  \oldbibliography{#1}%
  \setlength{\itemsep}{0pt}%
}
\titleformat{\section}{%
\normalfont\large\bfseries}{\thesection.}{1em}{}
\titleformat{\subsection}{%
\normalfont\normalsize\bfseries}{\thesubsection.}{1em}{}
\theoremstyle{plain}
\newtheorem {theorem}{Theorem}[section]
\newtheorem {lemma}[theorem]{Lemma}
\newtheorem {corollary} [theorem]{Corollary}
\newtheorem {proposition} [theorem]{Proposition}
\theoremstyle{definition}
\newtheorem{definition}[theorem]{Definition}
\newtheorem{remark}[theorem]{Remark}
\newtheorem{example}[theorem]{Example}
\theoremstyle{remark}
\newcommand{\res}
{\mathop{\hbox{\vrule height 7pt width .5pt depth 0pt \vrule
height .5pt width 6pt depth 0pt}}\nolimits}
\begin{document}

\title{John and uniform domains in generalized Siegel boundaries 
\thanks{2010 Mathematics Subject Classification. Primary 53C17; Secondary
49J15.
 Key words and Phrases.     SubRiemannian distance. John domains. $(\epsilon-\delta)$ domains.}
}

\author{Roberto Monti \thanks{Dipartimento di Matematica ``Tullio Levi-Civita'', Universit\`a degli Studi di Padova, (Italy)
 }  \and Daniele Morbidelli\thanks{Dipartimento di Matematica, Universit\`a di Bologna (Italy)}}

\date{}

 \maketitle


 \begin{abstract}
Given   the   pair of vector  fields    $X=\p_x+|z|^{2m}y\p_t$ and $ 
Y=\p_y-|z|^{2m}x \p_t,$
where $(x,y,t)= (z,t)\in\R^3=\C\times\R$, we give a condition on a bounded
domain $\Omega\subset\R^3$ which ensures that $\Omega$ is an $(\e
,\delta)$-domain  for the Carnot-Carath\'eodory metric.  We also
analyze the 
Ahlfors regularity of the   natural surface measure induced on  $
\partial \Omega$
by the vector fields. 
 \end{abstract}

%
 \section{Introduction}
 
In $\R^3=\C\times\R$ we consider  the vector fields
 \begin{equation}
 \label{eccoqui} 
  X=\p_x+|z|^{2m}y\p_t\qquad\text{and}\qquad Y=\p_y-|z|^{2m}x \p_t,
 \end{equation} 
where $(x,y,t)= (z,t)\in\R^3=\C\times\R$ and $m\in\left[1,+\infty\right[$ is a real
parameter. The  vector fields $X$ and $Y$ naturally arise as the real and
imaginary part of the  holomorphic  vector field tangent to the boundary of the
generalized Siegel domain  $\{(z_1,z_2)\in\C^2:\Im z_2>\frac{1}{2m+2}
|z_1|^{2m+2}\}$.

We study  the interaction of the
Carnot-Carath\'eodory (CC)  distance $d$ induced by $X$ and $Y$ with the
geometry
of a surface embedded in $\R^3$. Namely, we
give conditions on the boundary $\p\Omega$ such that an open set $\Omega
\subset\R^3$  is a John domain, a uniform domain and such that
  the natural surface measure induced on 
$\partial\Omega $ by 
$X$ and $Y$ is Ahlfors regular, see~Definition~\ref{MES}.

John domains are also known as domains with the \emph{twisted cone property},
see Definition~\ref{deffo}.
When the distance is induced by   H\"ormander vector fields in $\R^n$, 
several authors proved that a bounded John domain supports a 
global Sobolev-Poincar\'e inequality, see
\cite{Jerison,SaloffCoste92,FranchiLuWheeden96,GarofaloNhieu96} and
 the discussion for a general metric space in
\cite{HajlaszKoskela}. 
The exterior twisted cone property is also relevant in classical potential
theory
because it implies the subelliptic Wiener criterion (see
\cite{NegriniScornazzani}).

Uniform domains are also known as  $(\e,\delta)$-domains, see Definition
\ref{UNI}.
They form a subset of John domains. 
In the  global theory of Sobolev spaces for H\"ormander
 vector fields,  Garofalo and Nhieu
proved in \cite{GarofaloNhieu98} that subelliptic 
Sobolev functions in a uniform domain can be extended
to the whole space. In \cite{DanielliGarofaloNhieu06} 
it is also shown that the trace of a Sobolev function in a uniform domain with Ahlfors
regular boundary belongs to     a suitable Besov space of the boundary.
Also for this reason, we shall study the Ahlfors property very carefully.
The trace problem  was   analyzed in
\cite{MontiMorbidelli02} in the non-characteristic case and in a two-dimensional
model.
In  \cite{GerosaMontiMorbidelli}, we  
study by a direct approach the trace problem at the boundary of the
characteristic half plane $t>0$
for vector fields of  Martinet  type $X=\p_x$, $Y=\p_y+|x|^{\alpha}\p_t$ in
$\R^3$. 

In spite of the previous results, there are not  many examples
of  John and uniform domains in Carnot-Carath\'eodory spaces.
In fact, 
the subRiemannian case is  more delicate than the Euclidean one  because  of the
presence of \emph{characteristic points}, i.e.,~points  where   the
H\"ormander vector fields are all tangent to the boundary. Such points
make the construction of the inner cone more difficult. Sometimes the inner cone
does not exist at all, even for analytic boundaries, see 
e.g.~\cite[Theorem~1.2]{MontiMorbidelli05}.

  A well known general fact is that  small 
CC-balls are John domains.    We also mention some   contributions   by the russian school. See~\cite{VodopyanovGreshnov,Greshnov14,Greshnov18}, where   the authors study the uniformity of subRiemannian balls in Heisenberg groups.   See also \cite{Greshnov01}  and \cite{Romanovskii}, for further examples.  
In \cite{MontiMorbidelli05} it is proved that $C^2$ domains in Carnot groups
of step two are uniform. The case of cylindrically symmetric domains was
already considered in \cite{CapognaGarofalo98} in the Heisenberg group, that
is the model  \eqref{eccoqui} with $m=0$.
In \cite{MontiMorbidelli04} and
\cite{MontiMorbidelli05b}, the authors studied the case of diagonal vector
fields.

In this paper, we study uniform domains in $\R^3$ for the CC distance of the
vector fields~\eqref{eccoqui}. Our sufficient condition
for a domain to be uniform  requires the boundary  to be ``flat''
near characteristic points on the $t$-axis.

Let $\Omega\subset\R^3$ be an open set with $C^\infty$ boundary.
If both $X$ and $Y$ are tangent to $\partial \Omega$ at the point $p
\in\partial \Omega$, then there is a neighborhood $U_p$ of
$p$ such that $U_p\cap\partial\Omega$ is a graph of the form $t = \phi(z)$.
So we start from the following definition.

Let $A\subset \R^2$ be an open set and $\phi \in C^\infty(A)$. 
We say that
$\Sigma =\mathrm{gr}(\phi) =\{(z,\phi(z)) \in\R^3:z\in A\}$ is an
\emph{$m$-admissible
graph} if there exists a constant $C>0$ such that for all $z\in A$
\begin{equation}\label{terzina} 
  \abs{D^3\phi(z)}
 \leq C|z|^{2m-1},\quad
 \abs{D^2\phi(z)}\leq C|z|^{2m}\quad\text{ and }\quad \abs{D\phi(z)}\leq
C|z|^{2m+1}.
\end{equation} 
When $0 \notin A$, the three conditions \eqref{terzina} are trivially
satisfied in a compact subset of $A$. The conditions are instead   
restrictive when $0\in A$.   In \eqref{terzina} we adopted the notation $|D^k\phi(z)|:=\max_{j_1, \dots, j_k}|\frac{\p^k\phi }{\p x_{j_1} \cdots \p x_{j_k}}(z) |$ to denote the largest $k$-th order derivative of~$\phi$.

 \begin{definition}  Let  $m\in [1,+\infty[$. We say that a bounded domain
$\Omega\subset \R^3$
with smooth boundary is $m$-admissible 
if for any characteristic point $p\in\partial \Omega$ there exists a
neighborhood   $U_p$ of $p$ in $\R^3$ such that $\partial \Omega\cap
U_p$ is an
$m$-admissible graph.
\end{definition}

 It is easy to construct simple examples of admissible sets.  Consider for instance the bounded domain  \color{black} 
\begin{equation}\label{ammm} 
 \Omega=\{(z,t)\in\R^3: |z|^{2(m+1)}+t^2=1\}.
\end{equation} The boundary  $\p\Omega$ has two characteristic points, namely $
(0,0,-1)$ and $(0,0, 1)$, and the functions $\phi(z)=\pm \sqrt{1-|z|^{2(m+1)}\;}$ satisfy condition~\eqref{terzina}.  Small perturbations of the boundary, compactly supported outside the characteristic set, give  nonradial examples.

Our main result is the following:

\begin{theorem}\label{principale} 
Let $m\in\N$.    Any
$m$-admissible domain $\Omega\subset\R^3$
is   uniform  and, in particular,   is a John domain in $(\R^3,d)$.
\end{theorem}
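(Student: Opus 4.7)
The plan is to establish the $(\varepsilon,\delta)$-condition by a local-to-global argument. Since $\partial\Omega$ is compact, it suffices to exhibit at each boundary point a neighborhood in which a twisted cone can be built inside $\Omega$, and then to concatenate these local cones. Away from characteristic points, at least one of $X, Y$ is transverse to $\partial\Omega$; the standard non-characteristic construction then gives local uniformity by flowing inward along the transverse horizontal field. All of the difficulty is concentrated at the finitely many characteristic points, and these lie on the $t$-axis because $X, Y$ vanish precisely at $z = 0$.

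Fix a characteristic point, which after translating in $t$ we take to be the origin, and write $\partial\Omega \cap U = \{t = \varphi(z)\}$ with $\varphi$ $m$-admissible. Integration of \eqref{terzina} gives $|\varphi(z)| \leq C|z|^{2m+2}$, and this scaling matches the CC-geometry exactly: under the anisotropic dilation $\delta_\lambda(z, t) = (\lambda z, \lambda^{2m+2} t)$, which is an approximate symmetry of $d$, the rescaled graph $\lambda^{-(2m+2)}\varphi(\lambda z)$ has its first three derivatives bounded uniformly in $\lambda$ by \eqref{terzina}. Thus on every CC-scale the boundary is a controlled perturbation of a horizontal hyperplane $\{t = \text{const}\}$. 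The heart of the proof should then be a two-sided distance estimate
\[
 d\bigl((z,t),\partial\Omega\bigr) \;\asymp\; \rho(z,t),
\]
where $\rho$ is an explicit ``CC-height above the graph'' expressible in terms of $|z|$ and $|t - \varphi(z)|$. The upper bound is provided by an explicit admissible curve that changes $t$ by $t - \varphi(z)$ via a commutator-type small loop with controlled $z$-displacement; the lower bound relies on the pointwise control of $D\varphi$ and $D^2\varphi$ in \eqref{terzina}.

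With this estimate in hand, I would build a John curve from any $p \in \Omega$ near the characteristic point to a fixed interior center $p^\ast$ in two stages. First, move radially outward in the $z$-direction using a combination of $X$ and $Y$ until $|z|$ reaches a scale of order $d(p, p^\ast)$; second, join $p^\ast$ by a fixed path lying entirely in the non-characteristic region. Along the first stage one has $|z(s)| \gtrsim s$ and $t(s) - \varphi(z(s)) \gtrsim s^{2m+2}$, so $\rho(\gamma(s)) \gtrsim s$ by the distance estimate, giving the John property. The $(\varepsilon,\delta)$-condition is then extracted from the John property by the standard double-cone argument: two points $p, q \in \Omega$ with $d(p,q) < \delta$ are joined by following their John curves for arc-length of order $d(p,q)$ until both reach a safe region, where a direct horizontal arc of length $\asymp d(p,q)$ completes the connection.

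The main obstacle is the lower bound in the distance estimate. The $t$-component of a horizontal curve $\gamma = (x, y, t)$ satisfies $\dot t = |z|^{2m}(y\dot x - x\dot y)$, so any curve reaching $\partial\Omega$ from $(z,t)$ must use this integral to match $\varphi(z(1)) - t$. Both sides have the same homogeneous scaling under $\delta_\lambda$, and a naive comparison with $|D\varphi(z)| \leq C|z|^{2m+1}$ leaves an error of the same order as the main term; the higher-order bounds in \eqref{terzina} are needed to close this estimate uniformly along the curve. The hypothesis $m \in \mathbb{N}$ presumably enters via the polynomial structure of the $t$-component formula, which keeps the relevant expansions and exponential-map coordinates algebraic.
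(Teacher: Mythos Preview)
Your localization strategy and the handling of noncharacteristic points match the paper. The characteristic analysis, however, rests on a mistaken picture of the characteristic set and a John-curve construction that does not work.

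The vector fields $X=\partial_x+|z|^{2m}y\,\partial_t$ and $Y=\partial_y-|z|^{2m}x\,\partial_t$ never vanish; a boundary point $(z,\varphi(z))$ is characteristic when $ZF(z):=(XF,YF)(z)=0$, and this locus is in general neither finite nor confined to the $t$-axis. Example~\ref{esempietto} ($\varphi(z)=-x^{2m+1}y$) has the entire $x$-axis characteristic. As the introduction stresses, the genuine difficulty is producing inner cones of \emph{uniform} aperture at points arbitrarily close to a possibly one-dimensional characteristic set on which the order of degeneration jumps.

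Your proposed curve fails already at the first stage. The horizontal lift of the radial direction, $\tfrac{x}{|z|}X+\tfrac{y}{|z|}Y$, has zero $\partial_t$-component, so $t$ is constant along it. Starting from a boundary point $(z_0,\varphi(z_0))$, the height becomes $\varphi(z_0)-\varphi(z(s))$, which can be negative (and in Example~\ref{esempietto}, starting on the $x$-axis, the curve stays on $\partial\Omega$). The asserted estimate $t(s)-\varphi(z(s))\gtrsim s^{2m+2}$ is therefore unfounded, and with it the distance lower bound. The paper's construction (Theorem~\ref{giovannino}) is different in kind: one flows along the horizontal direction that \emph{maximally decreases} $F$, namely $-ZF(z)/|ZF(z)|$, for the specific time $\bar s=\varepsilon_0\,|ZF(z)|/|z|^{2m}$, and only then moves vertically in $t$. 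The switching time $\bar s$ interpolates between a purely vertical curve at characteristic points ($\bar s=0$) and a horizontal start away from them, and it is exactly this mechanism that yields a uniform John parameter across the whole characteristic region. The uniform property (Theorem~\ref{buonino}) is then obtained by a dichotomy on whether $d(p,q)$ is small or large compared with $\max\{|ZF(z)|/|z|^{2m},\,|ZF(\zeta)|/|\zeta|^{2m}\}$; in the first case one joins the curves along their first (horizontal) pieces, in the second via their vertical tails, and in both cases one must check that the two John axes stay quantitatively close.

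Finally, the role of $m\in\mathbb{N}$ is not the one you guess: the entire characteristic analysis in Sections~\ref{CONO}--\ref{disconosco} works for every real $m\geq 1$. Integrality is invoked only to cite the smooth-vector-field result of~\cite{MontiMorbidelli05b} at noncharacteristic points.
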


In fact, our proof shows that admissible domains are also non-tangentially
accessible (NTA).  Concerning our requirements on the rate of 
growth   \eqref{terzina} for the function $\phi$, it is 
easy to check that any open set which agrees in 
a neighborhood of the origin with  the epigraph    $\{ t>|z|^{\a}\}$
with $\a< 2m+2$
 is not a John   domain.

On the other hand, let us  consider the epigraph $\{ t>-x^{2m+1}y\}$ 
of Example~\ref{esempietto}. All the points  $(x,0,0)$  of the $x$-axis are
characteristic points of the boundary. 
However, the ``order of degeneration'' of such points is  2 when $x\neq 0$,
while it is $2m+2$ when $x=0$. The difficulty of our work in Section \ref{CONO}
is due to the fact that we need to construct   a family of inner cones  of
\emph{constant aperture} contained in  $\{ t>-x^{2m+1}y\}$ and with
vertex at points arbitrarily close to the characteristic set.
Furthermore, in order to prove the $(\e,\d)$-property, in
Section~\ref{disconosco} we  also need to show that cones with close vertices
  have quantitatively close axes.

Theorem 
\ref{principale} is proved in Sections~\ref{CONO},~\ref{disconosco} and~\ref{ultimissima}.
We first show that  global (i.e., with $A=\R^2$)
admissible  graphs have the global cone property  and  then that they satisfy
the
$(\e,\delta)$-condition. Finally, we deal with the case of bounded
domains.  
The proofs rely on a precise description of the
distance $d$, which will be discussed in Section~\ref{giovanni}, and on some
preliminary results proved in Section~\ref{prel}.

The natural surface area on $\partial \Omega$ is the perimeter measure of
$\Omega$ induced by the vector fields \eqref{eccoqui}. This is the measure
\begin{equation} \label{MES}
   \mu = \sqrt{\langle N,X\rangle ^2 + \langle N,Y\rangle^2\; }\, \mathcal H^2 \res
\partial \Omega,
\end{equation}
where $N$ is the unit Euclidean normal to $\partial \Omega$,
$\langle\cdot,\cdot\rangle$ is the standard scalar product of $\R^3$, and 
$\mathcal H^2 \res
\partial \Omega$ is the standard surface measure, i.e., the restriction of the
$2$-dimensional Hausdorff measure to $\partial\Omega$. This is a special case of
the
   variational definition of perimeter measure
in CC-spaces, see \cite{GarofaloNhieu98} and \cite{MontiSerraCassano}.
For admissible domains, the measure $\mu$ is codimension $1$ Ahlfors regular in
the following sense.

\begin{theorem} \label{Ahl} 
Let $m\in\N$  and denote by $B(p,r)$  
the CC-balls. 
For any  $m$-admissible domain $\Omega\subset\R^3$ there exist  constants
$C>0$ and $r_0>0$ such that for all $p\in\p\Omega$ and $0<r\leq r_0$
 \begin{equation}
  \label{Renzi}
  C^{-1}\frac{|B(p,r)|}{r}\leq \mu(B(p,r))\leq C\frac{|B(p,r)|}{r}.
 \end{equation} 
\end{theorem}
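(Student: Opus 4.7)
The plan is to reduce \eqref{Renzi} to the Nagel--Stein--Wainger volume formula for the CC-balls of $X,Y$ and then to match $\mu(B(p,r))$ with it via a direct graph computation, treating separately the ``characteristic scale'' $|z_p|\le r$ and the ``non-characteristic scale'' $|z_p|\ge r$. A direct calculation gives $[X,Y]=-(2m+2)|z|^{2m}\p_t$, and iterated commutators of length $k\le 2m+2$ span $\p_t$ with $\p_t$-coefficient of size $|z|^{2m-k+2}$, so the Nagel--Stein--Wainger theorem yields
\[
|B(p,r)|\sim r^4(|z_p|+r)^{2m}
\]
for $p=(z_p,t_p)\in\R^3$ and $r$ small. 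Thus \eqref{Renzi} is equivalent to $\mu(B(p,r))\sim r^3(|z_p|+r)^{2m}$.

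Wherever $\p\Omega$ is locally a graph $t=\phi(z)$, a direct computation from \eqref{MES} gives
\[
\mu\res\mathrm{gr}(\phi) = |V(z)-\nabla\phi(z)|\,dz, \qquad V(z):=|z|^{2m}(y,-x),
\]
with $|V(z)|=|z|^{2m+1}$. The field $V$ is divergence-free but \emph{not} exact: for every $\rho>0$ the loop integral $\oint_{|z|=\rho} V\cdot\tau\,ds=-2\pi\rho^{2m+2}$ is nonzero (with $\tau$ the counterclockwise unit tangent), while $\oint_{|z|=\rho}\nabla\phi\cdot\tau\,ds=0$ for any single-valued $\phi$. In the characteristic regime $|z_p|\le r$, admissibility identifies $\p\Omega$ near the relevant characteristic point $(0,t_0)$ with a graph satisfying $|\nabla\phi|\le C|z|^{2m+1}$, and the ball--box theorem sandwiches the $z$-projection of $\p\Omega\cap B(p,r)$ between two disks of radius $\sim r$ around $z_p$. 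The upper bound on $\mu(B(p,r))$ is then immediate from $|V|+|\nabla\phi|\le C|z|^{2m+1}$ and gives $\mu(B(p,r))\le C r^{2m+3}$. For the lower bound, the triangle inequality together with the loop identity implies
\[
\int_{|z|=\rho}|V-\nabla\phi|\,ds\ge\Big|\oint_{|z|=\rho}(V-\nabla\phi)\cdot\tau\,ds\Big|=2\pi\rho^{2m+2},
\]
and integrating in polar coordinates over a disk $\{|z|\le c'r\}$ contained in the $z$-projection of $\p\Omega\cap B(p,r)$ yields $\mu(B(p,r))\ge c''r^{2m+3}$, matching $|B(p,r)|/r$.

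In the non-characteristic regime, when $B(p,r)$ is disjoint from a fixed tubular neighborhood of the characteristic set, the horizontal gradient of a defining function of $\p\Omega$ is bounded below on $B(p,r)$, so standard codimension-one Ahlfors regularity for smooth non-characteristic hypersurfaces in CC-spaces (cf.~\cite{GarofaloNhieu98}) applies and gives $\mu(B(p,r))\sim|B(p,r)|/r\sim r^3(|z_p|+r)^{2m}$, via a horizontal foliation of $\p\Omega\cap B(p,r)$ by short integral curves of $X$ or $Y$ combined with a Nagel--Stein--Wainger estimate on transversal CC-balls. To glue the two regimes, cover the compact characteristic set of $\p\Omega$ by finitely many admissible graph charts and choose $r_0$ smaller than the Lebesgue number of this cover; every $B(p,r)$ with $0<r\le r_0$ and $p\in\p\Omega$ then falls into one of the two regimes, possibly after enlarging $r$ by a universal factor.

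The main technical obstacle is the lower bound in the characteristic regime: admissibility provides only a one-sided estimate $|\nabla\phi|\le C|z|^{2m+1}$, and pointwise the density $|V-\nabla\phi|$ may be very small or even vanish. The non-exactness of $V$, captured by the loop identity $\oint V\cdot\tau=-2\pi\rho^{2m+2}$, is precisely what rules out such cancellation after angular averaging and forces the density to be of the correct integral order $|z|^{2m+1}$.
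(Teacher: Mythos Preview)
Your proposal has a genuine gap: the two regimes you describe are not exhaustive. You split into $|z_p|\le r$ (``characteristic regime'') and ``$B(p,r)$ disjoint from a fixed tubular neighborhood of the characteristic set'' (``non-characteristic regime''), but you do not treat points with $|z_p|>r$ that lie \emph{near} the boundary-characteristic set $\{ZF=0\}$. This set need not reduce to the $t$-axis: for the admissible graph $\phi(z)=-x^{2m+1}y$ of Example~\ref{esempietto}, every point of the $x$-axis is characteristic, so for $p=(x_0,0,0)$ with $x_0\gg r$ one has $|z_p|>r$ and $|ZF(z_p)|=0$ simultaneously. Your loop argument with circles $|z|=\rho$ centered at the origin does not apply here, since the $z$-projection of $B(p,r)\cap\Sigma$ lies near $z_p$ and misses the origin entirely; and your appeal to ``standard non-characteristic Ahlfors regularity'' does not apply either, because the horizontal gradient $|ZF|$ vanishes at $p$.

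One could try to rescue the loop argument by using circles centered at $z_p$; Stokes then gives $\int_{|\zeta-z_p|=\rho}|ZF|\,d\mathcal H^1\ge (2m+2)\int_{|\zeta-z_p|<\rho}|\zeta|^{2m}\,d\zeta\sim|z_p|^{2m}\rho^2$, which would yield the correct lower bound $|z_p|^{2m}r^3$ \emph{provided} the projection $\pi(B(p,r)\cap\Sigma)$ contains a full Euclidean disk of radius $\sim r$ about $z_p$. But this is not automatic: when $|ZF(z_p)|\gg|z_p|^{2m}r$ the projection is a thin strip of width $\sim|z_p|^{2m}r^2/|ZF(z_p)|\ll r$ (see the paper's Case~1nc), and the loop bound becomes far too weak. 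The paper handles this by a further subdivision of the regime $r\le\epsilon_0|z_p|$ into Case~1c ($|z_p|^{2m}r\ge\beta|ZF(z_p)|$, projection contains a disk) and Case~1nc ($|z_p|^{2m}r\le\beta|ZF(z_p)|$, projection is a strip). In Case~1c the lower bound is obtained not via Stokes but via a quantitative rank-one estimate on the Jacobian of $ZF$ (Proposition~\ref{piove}), and in Case~1nc the thin-strip geometry is combined with $|ZF(\zeta)|\sim|ZF(z_p)|$ on the strip, the two factors of $|ZF(z_p)|$ cancelling to give a bound independent of $|ZF(z_p)|$. Your sketch recovers neither of these mechanisms; the Stokes-loop idea you use is exactly the paper's tool, but only for Case~2 ($r\gtrsim|z_p|$).
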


Above,   $|\cdot|$ denotes the Lebesgue measure in $\R^3$.
This theorem is proved in Section~\ref{ultimissima} and relies 
 on the delicate analysis  of  
 global admissible graphs tackled in Section~\ref{DiMaio}. Our analysis  will
require the   study of several
situations,  depending on 
how CC-balls  intersect the graph near the  characteristic set.

The  ball-box theorem for the distance $d$ is proved in  the first part of the paper.
For any $(z,t),(\z,\t)\in\R^3$, we define
the function
 \begin{equation} 
  \label{distanza} 
  \d((z,t),(\z,\t)) = \abs{z-\z}
  +
  \min
  \bigg\{ 
 |v|^{\frac{1}{2m+2}},
  \frac{     | v|^{1/2}}{ \abs{z}^m } 
\bigg\},
 \end{equation} 
where $v= \t-t+ \abs{z}^{2m}\omega(z,\z) $, and $\omega(z,\z) =    x\eta-y\xi
$ with $z=(x,y)$ and $\z=
(\xi,\eta)$.

 \begin{theorem}\label{eqq}  Let $m\in[1,+\infty[$.
There is a constant $C>1$ such that for all $p=(z,t),q=(\z,\t)\in\R^3$
\begin{equation}
\label{cinquetre} 
C ^{-1}\d(p,q)\leq d(p,q) \leq C  \delta
(p,q).                        
\end{equation}  
 \end{theorem}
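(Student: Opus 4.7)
My plan is the standard ball--box strategy: prove $d\le C\delta$ by constructing explicit horizontal curves of the right length, and prove $d\ge C^{-1}\delta$ by showing that every horizontal curve from $p$ to $q$ is at least that long. The pseudo-distance $\delta$ is designed so that the two branches of its $\min$ correspond to the two natural ways of producing ``vertical'' displacement against the weight $|z|^{2m}$, and I would track each case separately.

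For the upper bound I would concatenate two horizontal pieces. \emph{Stage I} is the horizontal lift of the Euclidean segment $s\mapsto(1-s)z+s\zeta$ in the base plane; this is horizontal of CC-length $|z-\zeta|$ and lands at some point $(\zeta,t_1)$. A direct evaluation of $\int_0^1|z(s)|^{2m}\bigl(y\dot x - x\dot y\bigr)\,ds$ along this segment yields that $\tau-t_1$ equals $v$ up to a controlled remainder; this is precisely why $v$ is defined with the correction $|z|^{2m}\omega(z,\zeta)$ in~\eqref{distanza}. \emph{Stage II} joins $(\zeta,t_1)$ to $(\zeta,\tau)$ by a small closed loop producing only a $t$-displacement. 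Two options compete: (a) a Heisenberg-type loop of radius $\rho$ near $\zeta$, which moves $t$ by an amount of order $\rho^2|\zeta|^{2m}$ and therefore realises $\pm v$ at cost $\rho\asymp|v|^{1/2}/|\zeta|^m$; (b) a detour to the characteristic axis---travel to $0$ at cost $\lesssim|\zeta|$, execute a loop of radius $\rho$ near the origin, then return---where $\partial_t$ is only reached through brackets of order $2m+2$, so the loop gains $t$ of order $\rho^{2m+2}$ and $\rho\asymp|v|^{1/(2m+2)}$ suffices. Picking the cheaper of (a) and (b) produces the $\min$ appearing in $\delta$.

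For the lower bound, let $\gamma(s)=(z(s),t(s))$, $s\in[0,L]$, be an arclength-parametrised horizontal curve from $p$ to $q$, so that $|\dot z|\le 1$ and $\dot t = |z|^{2m}(y\dot x - x\dot y)$. Integrating and subtracting $|z|^{2m}\omega(z,\zeta)$ gives
\[
 v \;=\; \int_0^L |z(s)|^{2m}\bigl(y(s)\dot x(s)-x(s)\dot y(s)\bigr)\,ds \;-\; |z|^{2m}\omega(z,\zeta),
\]
which I would rearrange by expressing $y(s)\dot x(s)-x(s)\dot y(s)$ through the increments $z(s)-z$ and $z(s)-\zeta$, and using $|z(s)-z|\le s$, $|z(s)-\zeta|\le L-s$. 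This yields $|v|\le C\bigl(\sup_s|z(s)|\bigr)^{2m}L^2$; combined with $\sup_s|z(s)|\le|z|+L$, one deduces both $|v|\lesssim L^2|z|^{2m}$ (useful when $L\lesssim|z|$) and $|v|\lesssim L^{2m+2}$ (useful when $L\gtrsim|z|$). In either regime $\min\{|v|^{1/(2m+2)},|v|^{1/2}/|z|^m\}\lesssim L$, and combined with $|z-\zeta|\le L$ this closes the lower bound.

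The main obstacle is the transition region where $L$ and $|z|$ are comparable and $\gamma$ straddles the characteristic axis: I must prevent spurious contributions in the estimate of $v$ coming from brief excursions during which $|z(s)|$ is much larger than either $|z|$ or $|\zeta|$, and on the upper-bound side I must verify that the construction always uses the strategy matching the active branch of the $\min$ (in particular, that the detour to the origin in option (b) is not wastefully invoked when $|\zeta|$ is already small enough for option (a) to win). The preliminary estimates announced for Sections~\ref{giovanni}--\ref{prel} are presumably calibrated precisely for this bookkeeping.
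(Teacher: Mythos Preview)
Your outline is correct and follows essentially the same ball--box strategy as the paper: an integral estimate along an arbitrary horizontal curve for the lower bound, and a horizontal lift of the segment $z\to\zeta$ followed by a vertical loop for the upper bound. Two tactical differences are worth noting. First, the paper systematically uses the rotational invariance~\eqref{rotoliamo} to reduce to $z=(x_0,0)$ with $x_0\ge0$, which streamlines all the algebra. Second, for the vertical step the paper does \emph{not} use your two competing constructions (a) and (b); instead it lifts a single square of side $u$ at the endpoint and computes via Stokes that the $t$-gain is $\asymp u^2(x^{2m}+u^{2m})$, so one construction already produces both branches of the $\min$---no detour to the origin is needed. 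The ``transition bookkeeping'' you correctly flag as the main obstacle is exactly the content of the paper's Cases G1, G2a--c in Step~3: after the horizontal lift one lands at $(\zeta,t_1)$ and the vertical estimate~\eqref{lemmao} has $|\zeta|^m$ in the denominator, not $|z|^m$, so one must show that the resulting bound is still controlled by $\delta(p,q)$; this is done by an explicit (and somewhat lengthy) case analysis on the relative sizes of $|z|$, $|\zeta-z|$, and $|v|^{1/(2m+2)}$.
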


This theorem is proved in Section \ref{giovanni}.
    Our proof is completely self-contained and works for
any $m\geq 1$, also noninteger. Note that when $m\in
[1,+\infty[\setminus\N$, 
the well known ball-box theorem in \cite{NagelSteinWainger} cannot be applied, because
the vector fields  \eqref{eccoqui} are not smooth at $z=0$. 
 In the case $m\in\N$,  a local version of Theorem~\ref{eqq} can be obtained from the classical results in~\cite{NagelSteinWainger}. The statement can in principle  be globalized by some dilation argument, but this requires some care. Here, we give an independent self-contained proof of Theorem~\ref{eqq}. In particular,  Step~2 and Step~3 of the proof of this theorem give a constructive  and quantitative explanation of the fact that any pair of points can be connected with a  horizontal path. 
\color{black}

\section{Ball-box estimate}

\label{giovanni} 

In this section, we prove Theorem \ref{eqq}  and, in Corollary \ref{scatola}
below, we rephrase it as a ball-box estimate. 

An absolutely continuous curve
$\gamma:[0,1]\to\R^3$ is \emph{horizontal} for the vector fields
\eqref{eccoqui},
if it satisfies $\dot\gamma=\alpha(s) X(\gamma) +\beta(s) Y(\gamma)$ for
a.e.~$s\in[0,1]$. 
The length of  $\gamma$ is defined as 
 \begin{equation*}
  \operatorname{length}(\gamma) =\int_0^1\abs{ (\alpha (s), \beta (s)) } ds.
 \end{equation*}
Given  points $(z,t),(\z,\t)\in\R^3$,  the CC distance $ d((z,t),(\z,\t))$ is
defined  as the infimum (the minimum, in fact) of the length  of all absolutely
continuous curves $\gamma:[0,1]\to\R^3$ connecting them.

We will   use  the following invariance properties of~$d$.  
For all  $(z,t),(\z,\t)\in\C\times\R$, $s,\theta\in\R$, and  $r>0$ we have:
\begin{align}
\label{rotoliamo}
&d((z,t),(\zeta,\t) )  =d (e^{i\theta}z,t), (e^{i\theta}\z,\t)) ; 
\\ 
\label{rotoliamo2}
& d((z,t), (\zeta,\t))=d((z,t+s),(\z,\t+s)); 
 \\
& d\big((r z,r^{2m+2}t),(r\z,r^{2m+2}\t))=r d((z,t),(\zeta,\t) ) .
\end{align}
We will also use the following elementary estimate,   holding for any $x,y\in\R$
and $m\geq 1$: 
 \begin{equation}
  \label{pillo}
C_{m}^{-1}(\abs{x}^{m-1}+\abs{y}^{m-1})\;\big||x|-|y|\big| \leq \big|\abs{x}^m -\abs{y}^m \big| \leq
C_{m}(\abs{x}^{m-1}+\abs{y}^{m-1})\abs{y-x} .
 \end{equation}

\begin{proof}[Proof of Theorem \ref{eqq} ]
\step{Step 1.} We claim that there exists a constant $C>0$, depending on
$m$, such that    $\delta((z_0,t_0),(\z,\tau) ) \leq C d((z_0,t_0),(\z,\tau))
$ for all points $(z_0,t_0),(\z,\tau)\in\R^3$.

By~\eqref{rotoliamo}-\eqref{rotoliamo2}, we can assume that $z_0= (x_0,0)$ with
$x_0\geq 0$ and $t_0=0$. In this case, we have $\omega(z_0,\z) = x_0 
\eta$,
with $\z=(\xi,\eta)$, and the definition in \eqref{distanza} for $\delta$ reads,
with $v = \t+x_0^{2m+1}\eta$, 
\begin{equation*}
 \delta((z_0,0), (\zeta,\t)) =  \abs{z_0- \z}
 +\min\Big\{ \frac{\abs{v}^{1/2}}{x_0^m},
 \abs{v}^{\frac{1}{2m+2}}\Big\}.
\end{equation*}

Let $\gamma=(z,t) :[0,T]\to\R^3$, $T>0$, be a unit-speed horizontal curve
connecting $(z_0,0)$ and $(\zeta,\tau)$. We let $z= z(s) = (x(s),y(s)) = (x,y)$.
From the unit-speed condition  $|\dot z|\leq 1$, we deduce that 
\begin{equation}
 \label{unox}
|z_0-\z| =\Big| \int_0^T \dot z \, ds\Big| \leq T.
\end{equation}
We estimate the quantity
\[
 v = \t+x_0^{2m+1} \eta = \int_0^T \Big\{ |z|^{2m} y\dot x +(x_0^{2m+1}
-|z|^{2m}
x) \dot y\Big\} ds.
\]
We claim that there exists a constant $C>0$ such that for all $s\in [0,T]$ we
have
\begin{equation} 
 \label{pax}
      |z|^{2m} |y|  +|x_0^{2m+1} -|z|^{2m}x| \leq C (x_0^{2m} s + s^{2m+1}).
\end{equation}
The left-hand side is evaluated at $s\in[0,T]$.
From $ |z| \leq x_0+ s$ and   $|y|\leq s$ we deduce that 
$|z|^{2m} |y| \leq C (x_0^{2m} s + s^{2m+1})$.
By the triangle inequality
and \eqref{pillo}, we have
\[
\begin{split}
  \big|x_0^{2m+1} -|z|^{2m}x\big| & 
  \leq \big| |z|^{2m} -x_0^{2m} \big | |x| + x_0^{2m} |x-x_0|
\\
&
\leq C_m  ( |z|^{2m-1} +x_0^{2m-1}) \big| |z|-x_0 \big| |x| + x_0^{2m}
|x-x_0|.
\end{split}
\]
Using $|x|\leq |z| \leq x_0+ s$, $\big||z|-x_0\big|\leq s$ and $|x-x_0|\leq s$ 
we   obtain $ \big|x_0^{2m+1} -|z|^{2m}x\big| \leq C (x_0^{2m} s + s^{2m+1})$.
This finishes the proof of \eqref{pax}.
 
Now, \eqref{pax} implies that $\abs{\t+x_0^{2m+1}\eta}\leq  C (x_0^{2m} T^2 +
T^{2m+2})$, 
which is equivalent  to  
 \begin{equation} \label{mia}
 T\geq C^{-1} \min \Bigl\{\frac{\abs{\t+x_0^{2m+1}\eta}^{1/2}}{x_0^m}, 
 \abs{\t+x_0^{2m+1}\eta}^{\frac{1}{2m+2 }}  \Bigr\} .
 \end{equation}
The inequalities \eqref{mia} and \eqref{unox} 
imply $ \delta((z_0,0), (\zeta,\t))  \leq C T$ and minimizing on $T$ we get
the claim made in the Step 1.

\medskip

\step{Step 2.} We claim that there exists a constant $C>0$ such that 
$d((z,t),(z,\t))\leq C \d((z,t),(z,\t))$ for all $z\in\C$ and $t,\t\in\R$, i.e.,
  \begin{equation}
\label{lemmao} 
   d((z,t),(z,\t))\leq C  \min\Big\{\abs{\t-t}^{\frac{1}{2m+2}},
   \frac{\abs{\t-t}^{1/2}}{\abs{z}^m}\Big\}.
  \end{equation}
By \eqref{rotoliamo}--\eqref{rotoliamo2}, we can without loss of generality
assume that $z= (x,0)$ with $x\geq 0$, $t=0$ and  $\t\geq 0$.

For each $u\geq 0$ consider
 the unit-speed  path   $[0, 4u]\ni s\mapsto \z(s)\in \R^2$ that
linearly connects the
points in the plane  $(x,0)$, $(x,u)$, $(x+u, u)$, $(x+u, 0)$ and $(x,0)$.
Let $R_u$ be the square enclosed by $\z$. The path $\z$ has length $4u$ and its
unique absolutely continuous  horizontal  lift  $s\mapsto
\gamma(s) = (\z(s),\t(s))$
satisfying $\tau(0)=0$
has final point
\begin{equation}
 \label{stella}
 \t(4u)=\int_\z \abs{\z}^{2m}(\eta d\xi-\xi d\eta) 
 =2(m+1)\int_{R_u}\abs{\z}^{2m}d\xi d\eta
 \geq C_0^{-1} u^2(x^{2m}+u^{2m}  ).
\end{equation}
We used  Stokes' 
theorem with the
counterclockwise orientation of $\z$. 
The function $u\mapsto \int_{R_u}|\z|^{2m}d\xi d\eta$ is a strictly increasing
bijection of $\left[0, +\infty\right[$ onto itself.

Let $\ol u$  be the unique number such that $\tau( 4\ol u )=\tau$. By the definition of the distance $d$ and by \eqref{stella}, we have  
\begin{equation*}
\begin{aligned}
 d((x,0,0), (x,0,\t)) & \leq  4\ol u =
 \min\big\{4u>0:  \tau(4 u) 
\geq 
\tau  \big\}
\\
&
 \leq   \min \big\{4u >0:\t\leq  C_0^{-1} u^2(x^{2m}+u^{2m} )  \big\}
\\
&
\leq C  \min\Big\{ \frac{\t^{1/2}}{x^m}, \t^{\frac{1}{2m+2}}\Big\}.
\end{aligned}
\end{equation*}
This concludes the proof of the  Step 2.

\medskip

\step{Step 3.}   We claim that the inequality  $d((z,t),(\z,\t))\leq C
\delta((z,t),(\z,\t))$ holds for all points $(z,t),(\z,\t)\in\C\times\R$.

We  preliminarily observe that, given $(u,v )=w\in\C$, for any point $(z,t) =
(x,y,t)$ we have 
\begin{equation*}
e^{uX+vY} (z,t)   =\Bigl(z+w, t+\omega(w,z)  \int_0^1\abs{z+s w}^{2m}ds 
 \Bigr),
\end{equation*}
where $ \omega(w,z) =  uy-vx $ and $e^{uX+vY} (z,t)$ denotes the value at time
$1$ of the integral curve 
 of $uX+vY$ starting from $(z,t)$ at time $0$.

By the triangle inequality, it follows that  
\begin{equation*}
\begin{aligned}
   d((z,t),  (\z,\t)) 
\leq d\big((z,t ), e^{(\xi-x)X+(\eta-y) Y}(z,t)\big) +
 d\big( e^{(\xi-x)X+(\eta-y) Y}(z,t), (\z,\t) \big).
\end{aligned}
\end{equation*}
In the last distance, the points are one above each other and so, by
\eqref{lemmao}, we get
\begin{equation}
 \label{NUM} 
\begin{aligned}
   d((z,t),  (\z,\t)) 
\leq   C\Big( \abs{z-\z}+
\min\Big\{
 |t-\tau +\lambda  |^{\frac{1}{2m+2}}
, 
 { |t-\t+\lambda |^{\frac 12}}/
{\abs{ \z}^m}
\Big \} 
 \Big),
\end{aligned}
\end{equation}
where
\[
 \lambda = 
\omega
(\zeta,z)  \int_0^1  \abs{z+s(\z-z )}^{2m}ds.
\]
We used $\omega(\z-z,z) = \omega(\z,z)$.

In order to prove the claim in the Step 3, we have to show that the right-hand
side in \eqref{NUM} is less than $C \delta((z,t),(\z,\t))$.
By \eqref{rotoliamo}--\eqref{rotoliamo2}, it is enough to prove this estimate
in the case $z=(x,0)$ with $x\geq 0$ and $t=0$. In this case, the distance
$\delta$ is
\[
\delta((z,0),(\z,\t)) = 
   |z-\z| +
  \min
  \bigg\{
 | v |^{\frac{1}{2m+2}},
  \frac{     |   v |^{1/2}}{x^m } 
\bigg\}  ,
\qquad 
 v = \t+ x^{2m+1}\eta.
\] 
We distinguish two cases:

\step{Case G1:} $\abs{v}^{\frac{1}{2m+2}}\leq \abs{v}^{\frac 1 2}/ {x^m} $,
i.e.,
$x\leq \abs{v}^{\frac{1}{2m+2}}$;
 
 \step{Case G2:} $\abs{v}^{\frac{1}{2m+2}}\geq  \abs{v}^{\frac 1 2}/{x^m}$,
i.e.,
$x\geq  \abs{v}^{\frac{1}{2m+2}}$.

\medskip

 \step{Case G1.} When $z=(x,0)$ and $t=0$ we have $\omega(\z,z) = - x\eta$ and,
see \eqref{NUM},  
\[
 |t-\t+\lambda| = \Big|\t+  
x\eta\int_0^1 \abs{z+s(\z-z)}^{2m}ds
\Big|.
\]
We claim that in the Case G1 we have 
\begin{equation}
\begin{aligned}\label{novecinquedue} 
\Big|\t+  
x\eta\int_0^1 \abs{z+s(\z-z)}^{2m}ds
\Big|^{\frac{1}{2m+2}}
&
 \leq C\Big(    \abs{\z-z}+  \abs{v}^{\frac{1}{2m+2}}\Big).
\end{aligned}
\end{equation}
This and \eqref{NUM} finish the proof of the the Step 3 in the Case G1, because
the right-hand
side in \eqref{novecinquedue} is  
$C \delta((z,0),(\z,\t)) $.

We prove \eqref{novecinquedue}.
By the triangle inequality, we have
\begin{equation}
 \label{spiezzo} 
\begin{split}
 \Big|\t+  
& x\eta\int_0^1 \abs{z+s(\z-z)}^{2m}ds \Big|
\leq \abs{v}+ x \abs{\eta}  
\Big| -x^{2m}+
\int_0^1 \abs{z+s(\z-z)}^{2m}ds \Big|
\\&=
\abs{v}+
x\, \abs{\eta} 
\Big| 
\int_0^1  \int_0^s  \frac{d}{d\rho}\abs{z+\rho(\z-z)}^{2m} d\rho \, ds
\Big|
\\&
=
\abs{v}+
2mx\, \abs{\eta} 
\Big| 
\int_0^1 \int_0^s\abs{z+\rho(\z-z) }^{2m-2}
\big\{ \langle z,\z-z\rangle +\rho|\z-z|^2    \big\}  d\rho  \, ds
\Big|
\\&
\leq  C( \abs{v}+\Theta ),
\end{split}
\end{equation} 
where we let
\begin{equation} \label{pacco}
 \Theta =x  \abs{\eta} 
  (x+\abs{\z-z})^{2m-2}
(x\abs{\xi-x} +|\z-z|^2    ).
\end{equation}
By the H\"older inequality and by the Case G1 we have
\[
\begin{split} 
 \Theta & \leq C x  \abs{\z-z} 
  (x+\abs{\z-z})^{2m}
    \leq C  (x+\abs{\z-z})^{2m+2}  
\leq C(|v|   +\abs{\z-z} ^{2m+2}).
\end{split}
\]
This and \eqref{spiezzo} finish the proof of \eqref{novecinquedue}.

\medskip

\step{Case G2.}  In this case we have  $x\geq \abs{v}^{\frac{1}{2m+2}}$, and
thus
\[
  \delta((z,0),(\z,\t)) =|z-\z|+\frac{\abs{v}^{1/2}}{x^m},
\qquad 
 v = \t+ x^{2m+1}\eta.
\]
We distinguish the following
three subcases:
\begin{align*}
&  \abs{\xi-x}\leq \frac 12 x;
 \tag{G2a}
\\ 
&   
\max\{\abs{\xi },\abs{\eta}\}\leq \frac 12 x;  \tag{G2b}
\\
 &   \max\{\abs{\xi },\abs{\eta}\}\geq  \frac 12 x
        \quad\text{and}\quad \abs{\xi-x}\geq \frac 12 x.
   \tag{G2c}
\end{align*}

In the Case G2a, 
the quantity $\Theta$ in \eqref{pacco} can be estimated as follows
\[
  \Theta  \leq C x  \abs{\eta} 
  (x+\abs{\eta })^{2m-2}
(x |x-\xi| +\eta^2 )  
\]
and from $ x\leq  x+\abs{\eta}  \leq  C|\z|$ we deduce that 
\[ 
 \frac{1}{|\z| ^{2m} } (\abs{v} +\Theta)
 \leq C \Big( \frac{|v|}{x^{2m}} +  \frac{ x 
\abs{\eta}}{ 
  (x+\abs{\eta })^{ 2}}
(x |x-\xi| +\eta^2 ) \Big )\leq C\Big( \frac{|v|}{x^{2m}} +  |\zeta-z|^2
\Big).
\]
This along with \eqref{spiezzo} and \eqref{NUM}
finish the proof of the Step 3 in the Case G2a.

 In the Case G2b, the quantities 
$x$, $x+\abs{\xi}$, $ x+\abs{\eta}$, $\abs{\xi-x} $ are mutually  
comparable with absolute constants and therefore, also using the H\"older
inequality, the quantity $\Theta$ in
\eqref{pacco} can be estimated as follows
\[
\Theta \leq C  x^{2m+1}\abs{\eta} \leq C |\z-z|^{2m+2} .
 \]
On the other hand, we have
\[
\abs{v}^{\frac{1}{2m+2}} \leq C
\Big (x+\frac{\abs{v}^{1/2}} {x^m}\Big) \leq C \Big(|\z-z|
+\frac{\abs{v}^{1/2}} {x^m}\Big ).
\]
These two inequalities imply, via \eqref{spiezzo}, that the claim
\eqref{novecinquedue} holds also in the Case~G2b.

 In the Case  G2c,    we have $x\leq C|\z| $ and from \eqref{pacco} we
estimate
\[
 \Theta \leq C  \abs{\z}   \abs{\eta} 
  (\abs{\z} +\abs{\z-z})^{2m-2}
(\abs{\z} \abs{\z-z} +|\z-z|^2    ) \leq C \abs{\z} ^{2m} \abs{\z-z}^2,
\]
and we conclude that
\[
 \frac{1}{\abs{\z}^{m}} (\Theta+|v|)^{1/2}  \leq C \Big( \abs{\z-z} +
\frac{|v|^{1/2}}{x^{m}}\Big).
\]
The proof is concluded also in this case.
\end{proof}

\begin{remark}\label{diametrale} 

The   argument of the proof of  Theorem
~\ref{eqq} shows  in fact the  
global
equivalence
\begin{equation}
\label{diamett} 
|(u,v)|\leq d ((z,t),  e^{uX+vY}(z,t))\leq  C_0|(u,v)|,\quad  \quad  
u,v,t\in\R, \quad z\in\R^2.
\end{equation} 
\end{remark}

Next we describe the   $d$-balls as suitable boxes.
For any  $\beta>0$ we define the weighted norm of $u=(u_1, u_2, u_3)\in\R^3$  
\[
 \norm{u}_{1,1,\beta}=\max\{|u_1|,\abs{u_2},\abs{u_3}^{1/\beta}\},
\]
and for any $p=(z,t) =(x,y,t) \in\R^3$ and $r>0$ we define the boxes
\begin{equation*}
\begin{aligned}
&  \Box_I (p, r) = \Big\{ \big( x+u_1, y+ u_2, t+ \abs{z}^{2m}(u_3+yu_1-
xu_2)\big):  \norm{u}
  _{1,1,2}< r  \Big\},
\\&
  \Box_J (p, r) = \Big\{\big( x+u_1, y+ u_2 ,t+ u_3+ \abs{z}^{2m}( yu_1-
xu_2)\big) 
  :  \norm{u}_{1,1,2m+2}< r \Big\}.
\end{aligned}
\end{equation*}

\begin{corollary}\label{scatola} Let $m\in [1,+\infty[$.
For any  $\a>0$ there exist constants  $b_1 , b_2 ,
 \d_0 >0$ such that for all $p=(z,t) \in\R^3$ and $r>0$ we have:
 
 (i) if $|z|\geq \a r$, then 
 \begin{equation}\label{giglio} 
  \Box_I(p, \d_0   r)\subset B(p, r)\subset \Box_I(p, b_1 r);
 \end{equation} 
 
(ii)  if  $r\geq \a|z|$, then 
 \begin{equation}\label{collo} 
  \Box_J(p,  \d_0  r)\subset B(p, r)\subset \Box_J(p, b_2  r).
 \end{equation} 
\end{corollary}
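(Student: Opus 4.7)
My plan is to reduce the statement to the equivalent claim with $d$ replaced by the function $\delta$ defined in~\eqref{distanza}, via Theorem~\ref{eqq}, and then compute $\delta$ directly on points of the boxes $\Box_I(p,r)$ and $\Box_J(p,r)$. By~\eqref{cinquetre}, proving the two inclusions with $\delta$ in place of $d$ is enough, up to adjusting the constants $b_1,b_2,\delta_0$ by the factor $C$ of~\eqref{cinquetre}. The whole statement then reduces to bookkeeping on the three quantities $|z-\zeta|$, $|v|^{1/(2m+2)}$ and $|v|^{1/2}/|z|^m$ which appear in the definition of $\delta$.

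For the inclusions $\Box_I(p,\delta_0 r)\subset B(p,r)$ and $\Box_J(p,\delta_0 r)\subset B(p,r)$, I would first observe that the two box parametrizations are designed precisely to linearize the quantity $v=\tau-t+|z|^{2m}\omega(z,\zeta)$. Indeed, for a point $q=(\zeta,\tau)=(z+w,\,t+|z|^{2m}(u_3+yu_1-xu_2))\in\Box_I(p,r)$ with $w=(u_1,u_2)$, the identity $\omega(z,z+w)=xu_2-yu_1$ gives $v=|z|^{2m}u_3$, hence $|v|^{1/2}/|z|^m=|u_3|^{1/2}<r$; substitution into~\eqref{distanza} yields $\delta(p,q)\le |w|+|u_3|^{1/2}\le (1+\sqrt 2)r$. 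The analogous computation for $q\in \Box_J(p,r)$ produces $v=u_3$, so $|v|^{1/(2m+2)}=|u_3|^{1/(2m+2)}<r$ and again $\delta(p,q)\le(1+\sqrt 2)r$. Notice that no hypothesis on $|z|$ is required at this stage; both first inclusions follow for $\delta_0$ small enough.

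For the reverse inclusions $B(p,r)\subset\Box_I(p,b_1 r)$ and $B(p,r)\subset\Box_J(p,b_2 r)$, I would start from $q\in B(p,r)$, which by Theorem~\ref{eqq} yields $|\zeta-z|\le Cr$ and $\min\{|v|^{1/(2m+2)},\,|v|^{1/2}/|z|^m\}\le Cr$. Writing $(u_1,u_2,u_3)$ for the inverse box parametrization, so that $u_3=v/|z|^{2m}$ in case (i) and $u_3=v$ in case (ii), the bounds on $u_1,u_2$ are automatic. For $u_3$ I would case-split on which branch of the min is achieved. In case (i) the good branch $|v|^{1/2}/|z|^m\le Cr$ gives $|u_3|^{1/2}\le Cr$ directly; the other branch $|v|^{1/(2m+2)}\le Cr$, combined with the hypothesis $|z|\ge \alpha r$, yields $|u_3|^{1/2}=|v|^{1/2}/|z|^m\le (Cr)^{m+1}/(\alpha r)^m=C\alpha^{-m}r$. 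Case (ii) is symmetric: the good branch $|v|^{1/(2m+2)}\le Cr$ is immediate, while the other branch $|v|^{1/2}/|z|^m\le Cr$ together with $r\ge\alpha|z|$ gives $|v|^{1/(2m+2)}\le (Cr|z|^m)^{1/(m+1)}\le C\alpha^{-m/(m+1)}r$. This case split, where the size hypotheses on $|z|$ are precisely what is needed to trade one branch of the min for the other, is the only point of the argument requiring real thought; everything else is substitution and triangle inequalities.
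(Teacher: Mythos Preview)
Your proof is correct and follows essentially the same approach as the paper. Both arguments hinge on the identities $v=|z|^{2m}u_3$ for the $\Box_I$-parametrization and $v=u_3$ for the $\Box_J$-parametrization, reduce via Theorem~\ref{eqq} to controlling $\min\{|v|^{1/(2m+2)},|v|^{1/2}/|z|^m\}$, and then perform the same case split on which branch of the minimum is active, using the hypothesis $|z|\gtrless \alpha r$ to trade one branch for the other. The paper organizes the argument slightly differently---it first proves the union inclusions $\Box_I\cup\Box_J$ in both directions and only afterwards specializes to a single box---but the content is the same.
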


 \begin{proof} 
  \step{Step 1.} We claim that for a suitable $\d_0>0$ we have
  \begin{equation*}
   \Box_I(p,  \d_0 r)\cup \Box_J(p, \d_0 r)
   \subset B(p , r)
   \subset \Box_I(p, r/ \d_0 )\cup \Box_J(p,  r/\d_0 ).
  \end{equation*}
To prove these inclusions, we observe that, letting $v = \t - t +|z|^{2m}(x\eta-
y\xi)$,
\begin{align}
 \label{giglio1}
 (\z,\t)\in \Box_I (p, r) & \quad\Leftrightarrow  \quad
 \max\bigg \{|\xi-x|,|\eta-y|,\frac{\abs{v}^{1/ 2  }
}{|z|^{m}}\bigg\}<r ,
\\
 \label{collo1}
 (\z,\t)\in \Box_J (p, r)
 & \quad\Leftrightarrow  \quad
 \max\bigg \{|\xi-x|,|\eta-y|, \abs{v}^{\frac{1}{2m+2 }} \bigg \}<r .
\end{align}
Thus the point $(\z,\t)$ belongs to the union of the boxes if and only if 
$|\xi-x|<r$, $|\eta-y|<r$ and 
\begin{equation*} \min\bigg\{ 
 \abs{v}^{\frac{1}{2m+2}} , \frac{\abs{v}^{\frac 12 } }{|z|^m}
 \bigg\}   <r.
\end{equation*}
Now the claim follows from Theorem  \ref{eqq}.  We also 
proved both the  inclusions in the left-hand side of \eqref{giglio} and
\eqref{collo}.

 \step{Step 2.} We  prove  the  inclusion in the right-hand side
of \eqref{giglio}. Let $|z|\geq \a r$ and let $(\z,\t)\in B(p, r)$. By the Step
1 we
know that $(\z,\t)\in (B_I\cup B_J)(p, r/ \d_0)$. Then, we are left
to show that  
 \begin{equation*}
  |z|\geq \a r\quad\text{and}\quad \min\Big\{|v|^{\frac{1}{2m+2}},
\frac{|v|^{1/2}}{|z|^m}
  \Big\}<  r/\d_0 \quad  \Rightarrow \quad  
\frac{\abs{v}^{1/2}}{|z|^m}< b_1  r. 
 \end{equation*}
If the minimum is $ {|v|^{1/2}}/ {|z|^m}$, there is nothing to prove.
Otherwise we have $|v|^{1/(2m+2)}< r/ \d_0 $, i.e., $|v|^{1/2}<
(r/ \d_0)^{m+1}$. This and $|z|\geq \a  r$ imply
 \begin{equation*}
  \frac{|v|^{1/2}}{|z|^m}\leq \frac{|v|^{1/2}}{(\a  r)^m}\leq \frac{(
r/ \d_0 )^{m+1}}{(\a r)^m}
  = \frac{1}{\a^m \d_0^{m+1}}r.
 \end{equation*}

 \step{Step 3.} We prove the inclusion \eqref{collo}. Arguing as in the Step 2,
it
suffices to prove that
 \begin{equation*}
  r\geq\a |z|\quad \text{and}\quad \min\Big\{|v|^{\frac{1}{2m+2}},
\frac{|v|^{1/2}}{|z|^m}\Big
  \}< \frac{r}{ \d_0} 
   \quad
  \Rightarrow 
\quad  
|v| ^{\frac{1}{2m+2}}\leq b_2  r. 
 \end{equation*}
If the minimum is $|v|^{\frac{1}{2m+2}}$, there is nothing to prove.
Otherwise we have 
\[ \frac{r}{ \d_0} \geq \frac{|v|^{1/2}}{|z|^m}\geq
|v|^{1/2}\Big(\frac{\a}{r}\Big)^m,
\]
that is  equivalent to $|v|^{1/2}\leq r^{m+1} /\d_0 \a^m$. This is the
claim.
  \end{proof}


\section{Geometry of admissible graphs}\label{prel} 

Let $\phi \in C^\infty(\R^2)$ be a smooth
function satisfying the
flatness conditions \eqref{terzina} at any point $z\in \R^2$.
A defining function for the graph of $\phi$ is  the function $F\in
C^\infty(\R^3)$ given by  $F(z,t)= \phi(z)-t$.
The derivatives  
\begin{equation*}
\begin{split}
  XF(z,t)& =XF(z)=\phi_x(z)-\abs{z}^{2m}y ,
\\ 
YF(z,t)& = YF(z)= \phi_y (z)+\abs{z}^{2m}x
\end{split}
 \end{equation*}
do not depend on $t$ and we let 
$ZF(z)=(XF(z),YF(z))$.
A point $(z,\phi(z)) \in \Sigma = \mathrm{gr}(\phi)$ is characteristic if and
only if
$ZF (z)=0$.
By \eqref{terzina}, the function $ZF$ satisfies  
\begin{equation}\label{wow} 
  |ZF(z)| \leq C \abs{z}^{2m+1},  \quad\  z\in\R^2.  
\end{equation}

\begin{example}\label{esempietto}   Let $m\in \N$. 
The graph of the function $\phi(z) = - x^{2m+1} y$   is
$m$-admissible
and  each point of the $x$-axis is characteristic.
\end{example}

The next proposition describes the restriction of the distance $d$ to an
admissible graph.

 \begin{lemma}
 \label{lascrivo}  Let $\phi \in C^\infty(\R^2)$ satisfy the conditions
\eqref{terzina}. Then there exist a constant $C_0>0$ such that  for all
$p=(z,\phi(z)), q = (\z,\phi(\z)) \in\Sigma $
\begin{gather}
 \label{gallo}
C_0^{-1} d(p,q)\leq \abs{\z- z }+
  \Big| 
  \frac{  \phi(\z )-\phi(z )}{   \mass ^{2m}}  + \omega(z,\z)  \Big|^{\frac
1 2} 
  \leq C_0 d(p,q),
 \end{gather}
 where $\mass = \max\{|z|,|\z| \} $ and $\omega(z,\z) = x\eta-y\xi$.
\end{lemma}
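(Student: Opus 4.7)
The strategy is to reduce to Theorem~\ref{eqq} and then to symmetrize its asymmetric formulation in order to obtain the normalization by $\mass=\max\{|z|,|\z|\}$ that appears in~\eqref{gallo}.

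Applied to $p=(z,\phi(z))$ and $q=(\z,\phi(\z))$, Theorem~\ref{eqq} yields that $d(p,q)$ is comparable to $|\z-z|+\min\bigl\{|v_z|^{1/(2m+2)},\,|v_z|^{1/2}/|z|^m\bigr\}$ with $v_z=\phi(\z)-\phi(z)+|z|^{2m}\omega(z,\z)$. The same theorem, applied with the roles of $p$ and $q$ exchanged and combined with $d(p,q)=d(q,p)$ and $\omega(\z,z)=-\omega(z,\z)$, gives an equally valid equivalence with $|z|^m$ replaced by $|\z|^m$ and $v_z$ replaced by $v_\z=\phi(\z)-\phi(z)+|\z|^{2m}\omega(z,\z)$. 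Choosing in each case the formulation whose denominator equals $\mass^m$ (i.e.~$v_z$ if $|z|\ge|\z|$, otherwise $v_\z$) produces
\[
d(p,q)\;\asymp\;|\z-z|+\min\Bigl\{|v_*|^{1/(2m+2)},\;|v_*|^{1/2}/\mass^m\Bigr\},\qquad v_*=\phi(\z)-\phi(z)+\mass^{2m}\omega(z,\z),
\]
and $|v_*|^{1/2}/\mass^m$ equals the second summand in~\eqref{gallo}.

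The left-hand inequality in~\eqref{gallo} is immediate from $\min\{A,B\}\le B$. For the right-hand inequality the only nontrivial regime is $|v_*|^{1/(2m+2)}\le|v_*|^{1/2}/\mass^m$, equivalent to $\mass^{2m+2}\le|v_*|$. Here the flatness hypothesis~\eqref{terzina} enters: integrating $|D\phi(w)|\le C|w|^{2m+1}$ along the segment $[z,\z]$ (on which $|w|\le\mass$) gives $|\phi(\z)-\phi(z)|\le C\mass^{2m+1}|\z-z|$, and the identity $\omega(z,\z)=x(\eta-y)-y(\xi-x)$ yields $|\omega(z,\z)|\le\mass|\z-z|$. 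Hence $|v_*|\le C\mass^{2m+1}|\z-z|$, which combined with the case assumption forces $\mass\le C|\z-z|$. Substituting back, both $|v_*|^{1/(2m+2)}$ and $|v_*|^{1/2}/\mass^m$ become $\le C|\z-z|$, so the right-hand side of~\eqref{gallo} is $\le C|\z-z|\le C'\,d(p,q)$.

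The main obstacle is the asymmetric dependence of Theorem~\ref{eqq} on the two endpoints; the symmetrization step above is what allows $\mass$ to appear naturally in the denominator. Once that is done, only the first-order bound $|D\phi|\le C|\cdot|^{2m+1}$ from~\eqref{terzina} is invoked---the estimates on $D^2\phi$ and $D^3\phi$ play no role in this lemma.
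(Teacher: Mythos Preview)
Your argument is correct and follows essentially the same route as the paper's proof: reduce to Theorem~\ref{eqq}, observe that the only nontrivial direction is when $|v_*|^{1/(2m+2)}$ realizes the minimum, and there use the first-order bound $|D\phi|\le C|\cdot|^{2m+1}$ from~\eqref{terzina} together with $|\omega(z,\z)|\le\mass|\z-z|$ to force everything below $C|\z-z|$. The only cosmetic differences are that the paper handles the symmetrization by the one-line reduction ``without loss of generality $|z|\ge|\z|$'' (so that $|z|=\mass$ directly), and packages your case analysis as a single Young-type inequality $a^{1/2}b^{1/2}\le C(a^{(2m+1)/(2m+2)}b^{1/(2m+2)}+b)$ rather than splitting into the two regimes.
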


\begin{proof} Without loss of generality, we prove the lemma in the case
$\abs{z}\geq\abs{\z}$.  We claim that, letting $v=   \phi(\z)-\phi(z)+
\abs{z}^{2m}
\omega(z,\z)$, we have 
\begin{equation} \label{cal}
 \frac{|v|^{1/2}}{\abs{z}^m} \leq C (|\z-z|+ | v |^{\frac{1}{m+2}})
\end{equation} 
Taking \eqref{cal} for granted  and starting  from Theorem \ref{eqq}, by
\eqref{cal} it follows that 
 \begin{equation*} 
 d(p, q)\simeq \abs{\z -z}
 + \min\Bigl\{  
   | v |^{\frac{1}{m+2}}
+
  \frac{|v|^{1/2}}{\abs{z}^m}
   \Bigr\} \simeq |\z-z|+  \frac{|v|^{1/2}}{\abs{z}^m} ,
 \end{equation*}
and this is \eqref{gallo}.

We prove \eqref{cal}. From the elementary inequality 
$a^{1/2}b^{1/2}\leq C 
 (a^{ \frac{2m+1}{2m+2}}b^{\frac{1}{2m+2}} +b)$ for   $a,b\geq 0$,
we obtain 
\begin{equation*}
\begin{split}
 \frac{|v|^{1/2}}{ \abs{z}^m}
& =\abs{z}^{1/2} 
\Big|
\frac{\phi(\z)-\phi(z)} { \abs{z}^{2m+1}} + \frac{\omega(z,\z) }{\abs{z}}
\Big|^{1/2}
\\
&
\leq C \Big\{ 
\abs{z}^{\frac{2m+1}{2m+2}} 
\Big|
\frac{\phi(\z)-\phi(z)} {\abs{z}^{2m+1}} + \frac{\omega(z,\z)   }{\abs{z}}
\Big|^{\frac{1}{2m+2}} 
+
\Big|\frac{\phi(\z)-\phi(z)} { \abs{z}^{2m+1}}  + \frac{\omega(z,\z)  
}{\abs{z}}
\Big| \Big\} 
\\
&
 \leq C \Big\{ 
 |v|^{\frac{1}{2m+2}} +
 \frac{|\phi(\z) -\phi(z)|} {\abs{z}^{2m+1}} 
+ \frac{|\omega(z,\z)  |}{\abs{z}}
  \Big\} 
\\ 
&
 \leq C \Big\{ 
 |v|^{\frac{1}{2m+2}} + |z-\z| 
  \Big\} ,
\end{split}
\end{equation*}
because by \eqref{terzina} and $|\z|\leq |z|$ we have 
\begin{equation*} 
 \abs{\phi(\z) -\phi(z)}     \leq  C
(\abs{z}^{2m+1}+\abs{\z}^{2m+1})\abs{\z-z}\leq 
 C  \abs{z}^{2m+1} \abs{\z-z}, 
\end{equation*}
and, moreover, $|\omega(z,\z)| = |\omega(z,\z-z)|\leq |z| |\z-z|$.
\end{proof}

In the next propositions, we  discuss other consequences of the conditions~\eqref{terzina}.

\begin{proposition}
Let $\phi\in C^\infty(\R^2)$ satisfy \eqref{terzina}.
For all $z,\z\in\R^2$ we have  
\begin{equation}\label{macchia2} 
\begin{aligned}
 \phi(\z)-\phi(z) +\mu_{\z,z}^{2m}\omega(z,\z) &
=\scalar{ZF(z)}{\z -z} 
 +\mass^{2m}O(\abs{\z-z}^2),\end{aligned}
\end{equation}  
where   $\mass = \max\{|z|,|\z| \}$ 
and the remainder satisfies 
$|O(\abs{\z-z}^2)| \leq C |z-\z|^2$ for a constant
$C>0$.
\end{proposition}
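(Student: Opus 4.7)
The plan is to reduce the identity to a Taylor expansion of $\phi$ at $z$, with the "correction" terms coming from the difference between $\mu_{z,\zeta}^{2m}$ and $|z|^{2m}$. First I would unfold the definition of $ZF$: writing $\zeta-z = (\xi-x, \eta-y)$, a direct computation gives
\begin{equation*}
\langle ZF(z),\zeta-z\rangle = \langle \nabla\phi(z),\zeta-z\rangle + |z|^{2m}\bigl(x\eta-y\xi\bigr) = \langle \nabla\phi(z),\zeta-z\rangle + |z|^{2m}\omega(z,\zeta).
\end{equation*}
So the identity \eqref{macchia2} is equivalent to the two estimates
\begin{equation*}
\phi(\zeta)-\phi(z) - \langle\nabla\phi(z),\zeta-z\rangle = \mu_{z,\zeta}^{2m}\,O(|\zeta-z|^2)
\end{equation*}
and
\begin{equation*}
(\mu_{z,\zeta}^{2m}-|z|^{2m})\,\omega(z,\zeta) = \mu_{z,\zeta}^{2m}\,O(|\zeta-z|^2).
\end{equation*}

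For the first one I would apply Taylor's theorem with integral remainder:
\begin{equation*}
\phi(\zeta)-\phi(z) - \langle\nabla\phi(z),\zeta-z\rangle = \int_0^1(1-s)\,D^2\phi\bigl(z+s(\zeta-z)\bigr)[\zeta-z,\zeta-z]\,ds.
\end{equation*}
For every $s\in[0,1]$ the point $z+s(\zeta-z)$ has norm at most $(1-s)|z|+s|\zeta|\leq\mu_{z,\zeta}$, and hypothesis \eqref{terzina} gives $|D^2\phi(w)|\leq C|w|^{2m}$, so the integrand is bounded by $C\mu_{z,\zeta}^{2m}|\zeta-z|^2$, yielding the required estimate.

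For the second one, if $\mu_{z,\zeta}=|z|$ the term vanishes identically; otherwise $\mu_{z,\zeta}=|\zeta|\geq|z|$ and inequality \eqref{pillo} gives
\begin{equation*}
\bigl|\mu_{z,\zeta}^{2m}-|z|^{2m}\bigr| = \bigl||\zeta|^{2m}-|z|^{2m}\bigr| \leq C_m\bigl(|\zeta|^{2m-1}+|z|^{2m-1}\bigr)\,\bigl||\zeta|-|z|\bigr| \leq C\mu_{z,\zeta}^{2m-1}|\zeta-z|.
\end{equation*}
Since $\omega(z,\zeta)=\omega(z,\zeta-z)=x(\eta-y)-y(\xi-x)$ one has $|\omega(z,\zeta)|\leq |z||\zeta-z|\leq\mu_{z,\zeta}|\zeta-z|$, so multiplying the two bounds produces $C\mu_{z,\zeta}^{2m}|\zeta-z|^2$. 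Summing the two contributions yields \eqref{macchia2}.

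The only even mildly delicate point is handling the $\mu_{z,\zeta}^{2m}-|z|^{2m}$ factor when $|\zeta|>|z|$, which is why \eqref{pillo} is needed; the rest is a standard Taylor estimate anchored by the pointwise flatness bound on $D^2\phi$.
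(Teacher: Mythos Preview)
Your argument is correct and matches the paper's proof essentially step for step: the paper first records the second-order Taylor expansion $\phi(\zeta)-\phi(z)+|z|^{2m}\omega(z,\zeta)=\langle ZF(z),\zeta-z\rangle+\mu_{z,\zeta}^{2m}O(|\zeta-z|^2)$ (your first estimate, after unfolding $ZF$), and then absorbs the discrepancy $(\mu_{z,\zeta}^{2m}-|z|^{2m})\omega(z,\zeta)$ into the remainder exactly as you do, via \eqref{pillo} and the bound $|\omega(z,\zeta)|\leq\mu_{z,\zeta}|\zeta-z|$. Your write-up is in fact slightly more explicit about the integral remainder than the paper's.
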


\begin{proof}
Expanding  $\phi$  at the second order at a point $z\in\R^2$, we obtain for any
$\z\in\R^2$ 
\begin{equation}\label{marrone} 
 \begin{aligned}
\phi(\z) & - \phi(z)   +\abs{z}^{2m} \omega(z,\z)  
= \langle ZF(z),\z-z\rangle  +\mass^{2m}
O(\abs{\z-z}^2).
\end{aligned}
\end{equation} 
By \eqref{terzina}, the remainder satisfies the uniform
estimate $\abs{O(\abs{\z-z}^2)}\leq C\abs{\z-z}^2$ for all $z,\z\in\C$. 
If $\mass = \abs{z} $, this is our claim \eqref{macchia2}.

If
$\abs{z}\leq \abs{\z}$,  starting from 
\eqref{marrone} it suffices to use the estimates $\abs{\omega(z,\z) }\leq 
  \abs{\z}\abs{\z-z}$ and 
\begin{equation*}
\begin{aligned}
 |\abs{\z}^{2m}-\abs{z}^{2m}|&\leq C \mu_{\z,z}^{2m-1}\abs{\z-z}\leq C 
\abs{\z}^{2m-1}
 \abs{\z-z} .
\end{aligned}
\end{equation*} 
and the proof is concluded. 
\end{proof}

Next we get a Taylor expansion  of $ ZF(\z)  $ with a remainder
$O(\abs{\z-z}^2)$. This is the only point where we use the assumption
$\abs{D^3\phi(z)}\leq C \abs{z}^{2m-1}$.

Let  $z,\z\in\R^2$ be points with $  |\z|\leq 2|z|$.  There exist points
$z'=z'(z,\z)$ and $z''=z''(z,\z)$
in the line segment $[z,\z]$ such that:
\begin{equation}\label{tedio} 
 \begin{aligned}
\phi_x(\z)&=\phi_x(z)+ \langle D \phi_x(z') ,\z-z\rangle+
 \abs{z}^{2m-1}O(\abs{\z-z}^2),
\\
  \phi_y(\z)&=\phi_y(z)+ \langle D \phi_y (z'') ,\z-z\rangle 
   +\abs{z}^{2m-1}O(\abs{\z-z}^2).
\end{aligned}
\end{equation} 
On the other hand, we have
\begin{equation*}
\begin{aligned}
&\abs{\z}^{2m}\eta=\abs{z}^{2m}y
+\abs{z}^{2m-2} \big \langle 
(2m xy ,\abs{z}^2+2my^2),\z-z\big \rangle  
+\abs{z}^{2m-1}O(\abs{\z - z}^2),
\\&
\abs{\z}^{2m}\xi= 
\abs{z}^{2m}x+
\abs{z}^{2m-2} \big\langle (\abs{z}^2+2m x^2, 2m xy ),\z-z\rangle 
+\abs{z}^{2m-1}O(\abs{\z - z}^2)
.
\end{aligned}
\end{equation*}

With these estimates, we have proved the following:

\begin{proposition}
Let $\phi\in C^\infty(\R^2)$ satisfy \eqref{terzina}.
For any $z,\z\in\R^2$ with $|\z|\leq 2 |z|$  we have 
\begin{equation*}
\begin{aligned}
 ZF(\z)=  
ZF(z)+ M(z,\z)(\z-z)+\abs{z}^{2m-1}O(\abs{\z-z}^2)
\end{aligned}
\end{equation*}
where $ZF=\begin{bmatrix}
           XF\\YF
          \end{bmatrix}
$, $\z-z=   
\begin{bmatrix}
          \xi-x\\\eta-y
          \end{bmatrix}
$
and $M=M(z,\z)$ is the $2\times2$ matrix
\begin{equation}\label{gervaso} 
 M  =\begin{bmatrix}
       \phi_{xx} (z')- 2m\abs{z}^{2m-2}xy  & \phi_{xy} (z')-
\abs{z}^{2m-2}(\abs{z} ^2+ 2m y^2)
       \\ 
       \phi_{xy}(z'')  +  \abs{z}^{2m-2}(\abs{z}  ^2+ 2m x^2)    &
\phi_{yy}(z'') + 2m\abs{z}^{2m-2}xy
      \end{bmatrix}.
\end{equation}
\end{proposition}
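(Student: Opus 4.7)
My plan is to assemble the statement directly from the two Taylor expansions recorded immediately before the proposition. Componentwise,
\[
XF(\zeta) - XF(z) = \bigl(\phi_x(\zeta) - \phi_x(z)\bigr) - \bigl(|\zeta|^{2m}\eta - |z|^{2m}y\bigr)
\]
and analogously for $YF$, so all I really need is to expand the $\phi$-piece and the $|w|^{2m}w$-piece separately to first order in $\zeta - z$ with a quadratic remainder, then subtract.

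For the $\phi$-expansion \eqref{tedio}, I would invoke the mean value theorem along the segment $[z,\zeta]$ to produce intermediate points $z', z'' \in [z,\zeta]$ with $\phi_x(\zeta) - \phi_x(z) = \langle D\phi_x(z'),\zeta-z\rangle$ and analogously for $\phi_y$. Strictly speaking no remainder is needed in this identity, but the $|z|^{2m-1}O(|\zeta-z|^2)$ error is the correct bound if one prefers the second-order Lagrange form around $z$, since $|D^3\phi(w)| \leq C|w|^{2m-1}$ by \eqref{terzina} and $|w|\leq 3|z|$ on $[z,\zeta]$ under the hypothesis $|\zeta|\leq 2|z|$.

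For the $|w|^{2m}\eta$-expansion, I would Taylor-expand the smooth function $g(\xi,\eta) = (\xi^2+\eta^2)^m\eta$ to first order around $z$. Direct differentiation gives $\partial_\xi g(z) = 2m|z|^{2m-2}xy$ and $\partial_\eta g(z) = |z|^{2m-2}(|z|^2 + 2my^2)$, matching the linear part recorded in the excerpt, and the second derivatives are $O(|w|^{2m-1})$, which on $[z,\zeta]$ with $|\zeta|\leq 2|z|$ is uniformly $O(|z|^{2m-1})$ as required for the Lagrange remainder. The symmetric computation for $|w|^{2m}\xi$ produces the gradient $|z|^{2m-2}(|z|^2 + 2mx^2,\ 2mxy)$ at $z$.

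Subtracting, the coefficient of $\zeta - z$ in the $X$-component is exactly $(\phi_{xx}(z') - 2m|z|^{2m-2}xy,\ \phi_{xy}(z') - |z|^{2m-2}(|z|^2 + 2my^2))$, which is the first row of the matrix $M(z,\zeta)$ in \eqref{gervaso}; the analogous assembly for the $Y$-component yields the second row, and the $|z|^{2m-1}O(|\zeta-z|^2)$ remainders combine additively. The only technical point to watch is the uniformity of all remainder bounds in $|z|^{2m-1}$, which is precisely where the restriction $|\zeta|\leq 2|z|$ enters, ensuring that the factor $|w|^{2m-1}$ along the segment is comparable to $|z|^{2m-1}$. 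Beyond this, the argument is pure bookkeeping rather than substance, so I do not anticipate a serious obstacle.
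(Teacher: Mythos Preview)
Your proposal is correct and follows exactly the approach the paper takes: the paper's entire argument consists of the two displayed expansions \eqref{tedio} and the following unlabeled display for $|\zeta|^{2m}\eta$ and $|\zeta|^{2m}\xi$, after which it simply says ``With these estimates, we have proved the following.'' You have reproduced the same computation, correctly identifying the gradients of $w\mapsto |w|^{2m}\eta$ and $w\mapsto |w|^{2m}\xi$ at $z$ and assembling the rows of $M$. One small clarification: the exact mean value identity $\phi_x(\zeta)-\phi_x(z)=\langle D\phi_x(z'),\zeta-z\rangle$ is indeed remainder-free, so the $|z|^{2m-1}O(|\zeta-z|^2)$ term in the final statement comes entirely from the Taylor expansion of the polynomial part, not from the $\phi$-part; your parenthetical about the second-order Lagrange form is unnecessary.
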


  If $\phi=0$, then $\det M= (1+2m)|z|^{4m}$. In this case,  the matrix
 $M$ is   nonsingular for all $z\neq 0$.
Example~\ref{esempietto} shows that, for  
some admissible functions,
nonsingularity may fail also at points $z\neq 0$.  However, we are able to show
 that the  matrix $M(z,\z)$  has always rank at least one and that  
it  satisfies the following quantitative nondegeneration property. This
property is needed to get an Ahlfors  lower bound in the noncharacteristic case, the Case
1c in next section.

\begin{proposition} \label{piove}
 Let $\phi\in C^\infty(\R^2)$ satisfy \eqref{terzina}. There exist 
constants $C_2>1$, $ \e_0, \e_2\in\left]0,1\right[  $   such that 
for all $z\neq 0$ there is a
unit
vector $u\in \mathbb{S}^1\subset\R^2$ such that for all $ r \in  \left]0,\e_0
\abs{z}\right[$   
we have 
\begin{equation}
 \label{proppo}
 \abs{M(z,\z)(\zeta -z)} \geq C_2^{-1 }\abs{z}^{2m} r 
\end{equation} 
for all $\z\in B_{\Eucl}\big(z+\frac r2 u  ,    \e_2 r   
\big)\subset B_\Eucl(z,r )\subset\R^2$. 
\end{proposition}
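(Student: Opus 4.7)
The plan is to exploit the antisymmetric part of $M(z,\z)$, which has size $(1+m)|z|^{2m}$ and is protected from cancellation by the $\phi$-perturbation. First I would decompose $M(z,\z)=M_s+M_a$ into symmetric and antisymmetric parts; a direct inspection of \eqref{gervaso} shows $M_a=c_M(z,\z)\,J$, where $J=\bigl(\begin{smallmatrix}0&-1\\1&0\end{smallmatrix}\bigr)$. The $\phi$-free contribution to $c_M$ is exactly $(1+m)|z|^{2m}$ (the antisymmetric part of the $|z|^{2m}$-block of $M_0$), while the $\phi$-contribution equals $\tfrac12(\phi_{xy}(z'')-\phi_{xy}(z'))$, which by the third-derivative bound in \eqref{terzina} and $|z'-z''|\leq|\z-z|\leq r\leq\e_0|z|$ is at most $C'\e_0|z|^{2m}$. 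So $c_M(z,\z)\geq\tfrac{1+m}{2}|z|^{2m}$ once $\e_0$ is small enough.

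I would then invoke the algebraic identity
\[
|Av|^2+|A(Jv)|^2=|Sv|^2+|S(Jv)|^2+2c^2,
\]
valid for every unit vector $v\in\mathbb{S}^1$ and every decomposition $A=S+cJ$ with $S$ symmetric; the cross terms cancel because $\langle S(Jv),v\rangle=\langle Jv,Sv\rangle=\langle Sv,Jv\rangle$ when $S=S^T$. Applied with $v=e_1$, $Jv=e_2$ to the matrix $A(z):=M(z,z)$ (for which $c=(1+m)|z|^{2m}$), this yields $\max\{|A(z)e_1|,|A(z)e_2|\}\geq(1+m)|z|^{2m}$. I would take $u=u(z)\in\{e_1,e_2\}\subset\mathbb{S}^1$ to be the coordinate vector realizing this maximum.

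Next I would transfer the bound from $A(z)=M(z,z)$ to $M(z,\z)$ by perturbation. The entries of $M(z,\z)-A(z)$ are differences of the form $\phi_{xx}(z')-\phi_{xx}(z)$, etc., which by \eqref{terzina} are bounded by $C|z|^{2m-1}|\z-z|\leq C\e_0|z|^{2m}$; thus $\|M(z,\z)-A(z)\|\leq C''\e_0|z|^{2m}$ on $B_\Eucl(z,\e_0|z|)$, and after a further shrinking of $\e_0$ I get $|M(z,\z)u|\geq\tfrac{1+m}{2}|z|^{2m}$. Finally, for $\z\in B_\Eucl(z+\tfrac{r}{2}u,\e_2 r)$, writing $\z-z=\tfrac{r}{2}u+w'$ with $|w'|<\e_2 r$ and using the crude bound $\|M(z,\z)\|\leq C_3|z|^{2m}$ (every entry of \eqref{gervaso} is $O(|z|^{2m})$), I obtain
\[
|M(z,\z)(\z-z)|\geq\tfrac{r}{2}|Mu|-\|M\|\,\e_2 r\geq|z|^{2m}r\Bigl(\tfrac{1+m}{4}-C_3\e_2\Bigr),
\]
which yields the claim for $\e_2$ sufficiently small. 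The main conceptual obstacle is the first step: a priori the symmetric Hessian $H_\phi$ could combine with the symmetric part of $M_0$ to make $M$ singular, so no direct column-norm argument can work. The surviving antisymmetric coefficient $(1+m)|z|^{2m}$ is the structural invariant of the vector fields that the flatness $|D^3\phi|\leq C|z|^{2m-1}$ preserves, and the sum-of-squares identity is what converts this surviving antisymmetric mass into a quantitative lower bound in some coordinate direction.
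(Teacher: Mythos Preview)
Your argument is correct and rests on the very same structural invariant the paper uses: the quantity $M_{21}(z,\zeta)-M_{12}(z,\zeta)=(2m+2)|z|^{2m}+\phi_{xy}(z'')-\phi_{xy}(z')$, which the bound $|D^3\phi|\leq C|z|^{2m-1}$ keeps bounded below by $2m|z|^{2m}$ once $\e_0$ is small. You package this as the antisymmetric coefficient $c_M$ and then invoke the identity $|Ae_1|^2+|Ae_2|^2=|Se_1|^2+|Se_2|^2+2c_M^2$; the paper instead writes the one--line inequality $|Me_1|+|Me_2|\geq |M_{21}|+|M_{12}|\geq |M_{21}-M_{12}|$, which extracts the same coefficient directly and already uniformly in $\zeta$, so no separate perturbation from $M(z,z)$ to $M(z,\zeta)$ is needed. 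In particular your closing remark that ``no direct column-norm argument can work'' overstates the obstacle: the paper's proof \emph{is} a direct column-norm argument, bounding each column from below by one off-diagonal entry. Both routes then finish identically by picking $u\in\{e_1,e_2\}$ and absorbing the $\e_2 r$ perturbation via $\|M(z,\zeta)\|\leq C|z|^{2m}$.
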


\begin{proof} 
Denote by $e_1, e_2$ the coordinate versors of $\R^2$. Then, letting
$M=M(z,\z)$, we have  
\begin{equation*}
\begin{aligned}
 \abs{M e_1}  + |M e_2| 
&  
\geq     
  \Big|     \phi_{xy}(z'')  +  \abs{z}^{2m-2}(\abs{z}^2+2m x^2)\Big| 
+\Big| \phi_{xy}(z') - \abs{z}^{2m-2}(\abs{z}^2+2m y^2)\Big|
\\
&
\geq   
(2m+2)\abs{z}^{2m} 
-\abs{\phi_{xy}(z')-\phi_{xy}(z'')}.
\end{aligned}
\end{equation*}
From \eqref{terzina} and 
$|z'-z''|\leq \abs{\z-z}\leq \e_0|z|$, we deduce that,  if $\e_0$ is conveniently small, then  we have the inequality
$
 \abs{\phi_{xy}(z')-\phi_{xy}(z'')} \leq |z|^{2m}
$. This implies that
$  \abs{M e_1}  + |M e_2|  \geq 2m \abs{z}^{2m}$.
Thus, given $z\neq 0$, at least one of the choices $u=e_1$ or $u=e_2$ ensures
that $\abs{M(z,\z)u}\geq    |z|^{2m}$ for all $\z $ such that $\abs{\z-z}<\e_0|z|$.
 Therefore,   for any $v$ with $|v|\leq 1$ and  $\e_2 >0 $  we have  
\begin{equation*}
|M(z,\z) (u+ \e_2 v  )|\geq  |z|^{2m}- \e_2|M(z,\z)|\geq  
   |z|^{2m}- \e_2 C|z|^{2m}
                    \end{equation*}
where $|M(z,\z)|$ denotes the operatorial norm, which under our assumptions
satisfies 
  $|M(z,\z)| \leq
C|z|^{2m}$. Thus, taking    $\e_2$  small enough we 
 get  a lower estimate with
$\frac12 |z|^{2m}$. 

The claim \eqref{proppo}  follows by multiplying the last inequality by
${r}/{2}$. 
\end{proof}

\section{Ahlfors property for entire admissible graphs}\label{DiMaio} 
In this section we prove Theorem~\ref{Ahl} 
in the case when the boundary of the domain is an entire admissible graph.
The case of a bounded domain is in Section~\ref{ultimissima}.
A discussion of the problem  in a translation  
invariant setting of step two is contained in \cite{CapognaGarofalo06}.

Let $\pi:\R^3\to\R^2$ be the projection $\pi(z,t) = z$
and denote by $\Sigma $ the graph of a function $\phi\in C^\infty(\R^2)$
satisfying \eqref{terzina}. By the area formula, the 
measure $\mu$ defined in \eqref{MES} satisfies, for any
$p\in\Sigma$ and $r>0$, 
\begin{equation} \label{wei}
  \mu(B(p,r)\cap\Sigma) = \int _{ \pi(B(p,r)\cap\Sigma)} |ZF(\z)| d\xi d\eta .
\end{equation}
The integration domain can be estimated using Lemma \ref{lascrivo}.
For any $z\in\R^2 $ and $r>0$ we define the ``disks''
\begin{equation} \label{dirro}
D(z,r) = \big\{
 \z\in\R^2: |\z -z| \leq r,\, 
 |\phi(\z)-\phi(z)+\mu_{\z,z}^{2m} \omega(z,\z) |
\leq \mu_{\z,z}^{2m} r^2\big\},
\end{equation}
where $\mu_{\z,z}=\max\{|z|,|\z|\}$. By Lemma \ref{lascrivo}, there exists a constant $C_0>0$ such that 
\begin{equation} 
\label{daxim}
 D(z,r/C_0)\subset \pi(B(p,r)\cap\Sigma) \subset D(z,C_0 r),\quad \text{for all $z\in\C$ and $r\in\left]0,+\infty\right[$}.
\end{equation}

The Lebesgue measure of the ball  $B(p,r)$, $p=(z,t) \in \R^3$, can be computed
using Corollary
\ref{scatola}.  
Let $ \e_0\in\left]0,1\right[ $ be the constant given by Proposition \ref{piove}. 
 From \eqref{giglio}--\eqref{giglio1} 
and 
\eqref{collo}--\eqref{collo1}, using Fubini-Tonelli Theorem we obtain
\begin{align*} 
 &|B(p,r)| \simeq |z|^{2m} r^4,\quad \text{if}\quad r\leq
\e_0\abs{z},
\\
\label{collo2}
 &|B(p,r)| \simeq r^{2m+4},\quad\,\,\,  \text{if}\quad r\geq
\e_0\abs{z}.
\end{align*}
 The equivalence constants depend on the parameter $\e_0$.

\subsection{Proof in the Case 1: $r\leq \e_0\abs{z}$.}
We claim that for any point $p=(z,\phi(z) )\in\Sigma$ and $r>0$ such that $r\leq
\e_0\abs{z}$
we have:
\begin{equation} \label{NULLO}
 \mu(B(p,r)\cap \Sigma)\simeq |z|^{2m} r^3.
\end{equation}

\noindent 
Observe that in Case 1 we have the obvious equivalence  
  $\frac 12|z|\leq|\z|\leq
\frac 32|z|$. 
Let $\beta>0$   be a parameter that will be fixed  
  after \eqref{terzovincolo}.   We distinguish two subcases:

\emph{Case 1c}: $\abs{z}^{2m}r \geq \beta\abs{ZF(z)} $. This
is  the characteristic case.

\emph{Case 1nc}:  $\abs{z}^{2m}r \leq\beta\abs{ZF(z)} $. This is the
non-characteristic case.

\medskip 

\noindent 
In the  Case  1c, points are in a quantitative way near the  characteristic set
of $\Sigma$, where $|ZF(z)|=0$.

 \step{Case 1c -- upper bound.}  We start from the elementary  
inclusion $ 
 \pi(B(p, r)\cap\Sigma )\subset \{\z\in\R^2  :\abs{\z-z}
 \leq   r\}$.
 Thus,  using the expansion \eqref{macchia2}  and the trivial estimate $|M
(\z-z)|  \leq C \abs{z}^{2m}\abs{\z-z} $ we obtain 
\begin{equation*}
\begin{aligned}
 \int_{\abs{\z-z}\leq r }\abs{ ZF(\z)}& d\xi d\eta
 = 
  \int_{\abs{\z-z}\leq r }\Big| ZF(z)+ M (\z-z)+\abs{z}^{2m-1}O(\abs{\z-z}^2)
\Big| d\xi d\eta 
 \\&
 \leq C\int_{\abs{\z-z}\leq r }\Big(\frac{1}{\beta}\abs{z}^{2m}r + C
\abs{z}^{2m}\abs{\z-z}
 +C \abs{z}^{2m-1}\abs{\z-z}^2
 \Big) d\xi d\eta
\\&
\leq C \Big(\frac{1}{\beta}+C\Big)\abs{z}^{2m}r^3.
\end{aligned}
  \end{equation*}
We also  used $r\leq \e_0|z|$  to estimate the
third term.

\step{Case 1c  -- lower bound.}
We claim that there exist constants $\e_1>0$ and $\b>0$ such that 
\begin{equation} \label{ghio}
 \{ \z  \in\R^ 2 :|\z-z| \leq \e_1 r/C_0\} \subset
\pi(B(p,r)\cap\Sigma). 
\end{equation} 
The constant $C_0$ is the one given by Lemma \ref{lascrivo}.

In view of the expansion \eqref{macchia2}, the set $D(z,r/C_0)$ introduced in \eqref{dirro} satisfies 
\[
 D(z,r/C_0) = \big \{ \z\in\R^2: |\z-z|\leq r/C_0,\, 
| \scalar{ZF(z)}{\z -z}
 +\mass^{2m}O(\abs{\z-z}^2)   |
 \leq C_0^{-2}  \mass^{2m}r^2 \big \},
\]
where, 
 for some absolute constant $C_1$, we have
  $|O(\abs{\z-z}^2)|\leq C_1\abs{\z-z}^2 \leq  C_1 C_0^{-2} \e_1^2 r^2$,
provided that $|\z-z| \leq \e_1 r /C_0$.
If $\e_1$ satisfies 
\begin{equation}\label{primovincolo} 
 C_1\e_1^2\leq \frac 12,
\end{equation} 
  then we have the inclusion
\begin{equation} \label{gaio}
  \big \{ \z\in\R^2: |\z-z|\leq \e_1  r/C_0,\, 
| \scalar{ZF(z)}{\z -z}
    |
 \leq \frac 12  C_0^{-2} \mass^{2m}r^2\big \}\subset  D(z,r/C_0) .
\end{equation}
By the Case 1c, we have
\begin{equation*}
 \abs{\scalar{ZF(z)}{\z-z}} \leq \frac{1}{\beta}\abs{z}^{2m}r \abs{\z-z}
 \leq \frac{1}{\b} \e_1  C_0^{-1}\mass^{2m} r^2.
\end{equation*}
Thus, if  $\e_1$  and $\beta$ are such that 
\begin{equation}\label{secondovincolo} 
 \frac{\e_1 C_0^{-1}}{\beta}\leq \frac{C_0^{-2}}{2},
\end{equation} 
the inclusion  \eqref{ghio} holds, as we claimed.

 Next we use  
Proposition~\ref{piove} to  estimate  $|ZF(\z)|$ from below at all points 
$\z\in B_{\Eucl}\big(z+\varrho u/2 ,  \e_2  \varrho   
\big)$,
where  
$\varrho=\e_1 C_0^{-1}r$ and $u\in\mathbb S^1$ is such that \eqref{proppo}
holds. Namely, we have 
\[
 |M(\z-z)|\geq C_2^{-1}|z|^{2m}\e_1 C_0^{-1}r,
\]
where $M = M(z,\z)$ is the matrix \eqref{gervaso}. 
For such a point $\z$ we   have
\begin{equation}
\label{origlio} 
\begin{aligned}
 |ZF(\z) | & =| 
ZF(z)+ M(\z-z)+\abs{z}^{2m-1}O(\abs{\z-z}^2) |
\\
& \geq C_2^{-1}|z|^{2m}\e_1 C_0^{-1}r 
 -|ZF(z)|-|z|^{2m-1}O(\abs{\z-z}^2
 )
 \\&\geq    C_2^{-1}|z|^{2m} \e_1 C_0^{-1}r  - \frac{1}{\b}|z|^{2m }r -
C_1|z|^{2m-1}
 \e_1^2  r^2
 \\&
 \geq
  C_2^{-1}|z|^{2m} \e_1 C_0^{-1}r  - \frac{1}{\b}|z|^{2m }r - C_1|z|^{2m }
 \e_1^2  r 
 \geq \frac 12 C_2^{-1}|z|^{2m} \e_1 C_0^{-1}r,
\end{aligned}
\end{equation}
provided that 
\begin{equation}
 \label{terzovincolo}
 \frac{1}{\beta}\leq \frac 14 C_2^{-1}  \e_1 C_0^{-1} \qquad\text{and }
 \quad 
 C_1 
 \e_1^2   \leq  \frac 14 C_2^{-1}  \e_1 C_0^{-1}.
\end{equation} 
We
may choose $\e_1>0$ such  that
the inequalities in the right-hand side of \eqref{terzovincolo} and in
\eqref{primovincolo} both hold.
Then  we fix $\beta>0$  such that  the  
inequalities  in the   left-hand side of
\eqref{terzovincolo} and in \eqref{secondovincolo} both hold.

By \eqref{ghio} and \eqref{origlio}, we finally obtain
\[
\int _{ \pi(B(p,r)\cap\Sigma)} |ZF(\z)| d\xi d\eta \geq  C_3^{-1} |z|^{2m} r^3,
\] 
where, after fixing $\e_1$ and $\beta$, the constant $C_3$ is absolute.

\step{Case 1nc -- upper bound.}   In order to evaluate from above the
integral in \eqref{wei},  we start from the    estimates
$
 \abs{M (\z-z)}\leq C \abs{z}^{2m}\abs{\z-z}\leq C\abs{z}^{2m}r\leq
C\beta|ZF(z)|
$,
where $C$ is an absolute constant. Here we used the fact that $|\z-z|<r$ for all points $\z$ in the integration set. Therefore, the weight in the integral
\eqref{wei} satisfies
\begin{equation} \label{silla}
\begin{aligned}
|ZF(\z)|= \big|ZF(z)+ M (\z-z)
    +\abs{z}^{2m-1}O(\abs{\z-z}^2)  \big|
     \leq C (1+ \beta )|ZF(z)| .
\end{aligned}
\end{equation}
In order to get the required estimate,  the obvious inclusion
$\pi(B(p,r)\cap\Sigma)\subset B_{\textrm{Eucl}}(x,r)$ does not suffice. We need
  the stronger condition   \eqref{gallo}, which tells that  for some
absolute constant $C_0>0$ we have $ 
\pi(B(p,r)\cap\Sigma) \subset
D(z,C_0 r)$.
Using the definition \eqref{dirro} of $D(z,C_0r)$, the expansion \eqref{macchia2} and also using $|z|\simeq |\z|$, that follows from
Case 1, we
discover that 
 \begin{equation*}
\Big| \Big\langle\frac{ZF (z)}{\abs{ZF (z)}},\z-z \Big\rangle
\Big|\leq C\frac{\abs{z}^{2m}}{|ZF(z)|} r^2,\qquad \text{for all $\z\in D(z, C_0r)$} .
\end{equation*} 
This tells that  the  projection of the  set $D(z, C_0r)$ along the unit direction $\frac{ZF(z)}{|ZF(z)|}$ has size $\frac{|z|^{2m}}{|ZF(z)|}r^2$. Therefore  the Lebesgue measure of $\pi(B(p,r)\cap\Sigma)$
satisfies the inequalities 
\begin{equation} \label{mario}
 |\pi(B(p,r)\cap\Sigma)| \leq |D(z,C_0 r)| \leq C\frac{\abs{z}^{2m}}{|ZF(z)|}
r^3,
\end{equation}
for an absolute constant $C>0$. 
Ultimately, from \eqref{silla} and \eqref{mario}, we obtain the upper-bound:
\[
  \mu(B(p,r)\cap \Sigma) = \int _{\pi(B(p,r)\cap\Sigma)} |ZF(\z)|d\xi d\eta 
  \leq  C (1+\beta) \abs{z}^{2m}  r^3.
\]

\step{Case 1nc -- lower bound.} 
Observe first that  \eqref{gaio} holds also in this case. Then, under our choice
of $\e_1$ and $\b$, we have
\begin{equation*}
\begin{aligned} 
\Big\{\z\in\R^2 : \abs{\z-z}\leq \e_1 C_0^{-1}r \;
 \text{ and }\abs{ \scalar{ZF(z)}{\z-z}} \leq  \frac 12
\mass^{2m}C_0^{-2}r^2\Big \}\subset 
\pi(   B(p, r)\cap\Sigma)  .
\end{aligned}
\end{equation*}
 Let $  \e_3\leq \e_1$ be a small positive constant to be fixed
below. Then we have
\begin{equation*}
\begin{aligned} 
\Big\{\z\in\R^2 : \abs{\z-z}\leq \e_3 C_0^{-1}r \;
 \text{ and }\abs{ \scalar{ZF(z)}{\z-z}} \leq  
\abs{z} ^{2m}C^{-1}r^2\Big
\}\subset 
\pi(   B(p, r)\cap\Sigma)  ,
\end{aligned}
\end{equation*}
The  set in the left-hand side has size $\frac{|z|^{2m}}{|ZF(z)|} r^2$
along the unit direction $ZF(z)/|ZF(z)|$. Then 
we have the estimate from below for the Lebesgue measure of the integration set.
\begin{equation} \label{AA}
 |\pi(   B(p, r)\cap\Sigma)| \geq C \frac{\abs{z}^{2m} r^ 3}{|ZF(z)| }.
\end{equation}

To conclude the proof, we get a lower estimate for the function in the integral. 
\begin{equation}
 \label{BB} 
\begin{aligned}
|ZF(\z)|& =  \big|ZF(z)+ M (\z-z)+\abs{z}^{2m-1}O(\abs{\z-z}^2)  \big|
   \geq |ZF(z)|- C_4  | z |^{2m} \abs{\z-z} 
 \\&
 \geq |ZF(z)|-C_4|z|^{2m}\e_3 C_0^{-1}r  
 \\&
 \geq |ZF(z)|-
 C_4 \e_3 C_0^{-1}\beta |ZF(z)|
 \geq \frac 12|ZF(z)|, 
\end{aligned}
\end{equation}
provided that $\e_3$ is so small that 
$C_4\beta\e_3 C_0^{-1}\leq \frac 12$. 
  The lower bound follows from~\eqref{AA} and~\eqref{BB}. This ends the proof of~\eqref{NULLO}.

\subsection{Proof in the Case 2: $r\geq
\e_0\abs{z} $. }
We claim that for any point $p=(z,\phi(z) )\in\Sigma$ and $r>0$ such that $r\geq
\e_0\abs{z}$
we have: 
\begin{equation} \label{NULLO2}
 \mu(B(p,r)\cap \Sigma)\simeq  r^{2m+3}.
\end{equation}

We can without loss of generality assume that $\phi(0)=0$.
By Lemma \ref{lascrivo}  and  $\abs{\phi(z)}\leq
C|z|^{2m+2}$, there     exists a constant $C_5>1$ such that  
\begin{equation}\label{gotti} 
 C_5^{-1}|z|\leq d((0,0) , (z,\phi(z)))\leq C_5 |z|.
\end{equation}

\step{Case 2 -- upper bound.}   
For  $z\in\R^2$ and  $r\geq \e_0 |z|$, we have the inclusions
\[
  B((z,\phi(z)), r) \subset B((0,0), C_5|z| + r)\subset B
\big((0,0),(\e_0^{-1}+C_5)r\big).
\]
Thus, we obtain 
\begin{equation*} \int_{\pi( B(p, r)\cap\Sigma)
 }|ZF(\z)|d\xi d\eta  \leq \int_{|\z|
   \leq C r} C|\z|^{2m+1}d\xi d\eta 
\leq Cr^{2m+3}. 
\end{equation*}

To check the lower bound, we distinguish two cases.
 
\step{Case 2a:  $\e_0|z|\leq r
\leq 2C_5 |z| $.}  It suffices to start from inclusion $B((z,\phi(z)), r)\supset
 B((z,\phi(z)), \e_0 |z|)$. Then the perimeter measure of the smaller ball can
be estimated as in  Case 1.  To conclude observe that $|z|\simeq r$.


\step{Case 2b:  $2C_5 |z|\leq r<\infty$.}  
  In such case we have  
\begin{equation}
\label{duosp}  
B\big( 0,  r/({2C_5^2 })\big)\cap\Sigma \subset  B(p, r)\cap\Sigma .
\end{equation} 
Indeed, for any $q=(\z,\phi(\z))\in B(0,{r}/({2C_5^2 })
)$   we have $|\z|\leq r/(2C_5)$ and 
\begin{equation*}
\begin{aligned}
 d(p,q)\leq d(0, p)+ d(0, q) \leq C_5|z|+C_5|\z|
\leq r,
\end{aligned}
\end{equation*}
as  claimed.  

By \eqref{duosp}, it is enough to prove the lower-bound estimate in the case
$z=0$. 
For $\varrho>0$, we calculate by Stokes'  theorem  the following integral on the
curve 
$\gamma_\varrho
(s)=\varrho e^{-is}$,  with $s\in[0,2\pi]$, 
 \begin{equation*}
\begin{aligned}
  \int_{\gamma_\varrho }(XF d\xi + YF d\eta) &= \int_{|\z|<\rho} \Big( -
  \p_\eta 
  (\p_\xi \phi(\z)- |\z|^{2m}\eta )+ \p_{\xi}( \p_{\eta}\phi(\z)+ |\z|^{2m}\xi) 
  \Big) d\xi d\eta 
  \\& = (2+2m)\int_{|\z|<\r}|\z|^{2m}d\xi d\eta = C^{-1} \r^{2m+2},
\end{aligned}
 \end{equation*}
that implies
$  
  \int_{|\z| = \r }|ZF(\z)| d\mathcal{H}^1(\z) \geq C^{-1} \r^{2m+2}.
$
Thus we get the  estimate
\begin{equation*}
 \int_{|\z|<r}|ZF(\z)|d\xi d\eta =\int_0^r\int_{|\z|=\r} |ZF(\z)| 
 d\mathcal{H}^1(\z)  d\rho \geq C^{-1}r^{2m+3}.
\end{equation*}
This inequality ends the proof of \eqref{NULLO2} and thus of Theorem \ref{Ahl} 
in the case of entire
admissible graphs.

\section{Cone  property for  admissible entire epigraphs}
\label{CONO}

We recall the definition of a John domain, specialized to the metric space
$(\R^3,d)$.

 \begin{definition} \label{deffo}
 A bounded open   set
$\Omega\subset\R^3$
is a $\lambda$-John domain, with $\lambda>0$, if  there exists a point 
$p_0\in\Omega$ such that for all $p\in\Omega$ there is  a continuous curve
$\gamma:[0,1]\to \Omega$ such that $\gamma(0)=p$, $\gamma(1)= p_0$,
and 
\begin{equation} \label{jolly}
   B\big( \gamma(t), \lambda \diam(\gamma|_{[0,t]})\big)\subset\Omega
\quad\text{for all $t\in  \left]0,1\right[$.}
\end{equation}
A curve $\gamma$  satisfying \eqref{jolly}
is called a \emph{John curve in $\Omega$ with parameter $\lambda>0$}.
 \end{definition}

In our definition, the curve $\gamma$ is  not required to be rectifiable. By
the results
of \cite{MartioSarvas79}, Definition \ref{deffo} is equivalent to the more
standard one with rectifiable curves and  with diameters replaced by lengths.  
John domains are also known as  domains with the twisted interior
cone property.

In this section, we consider an  unbounded  domain of the epigraph type $\Omega
= \mathrm{epi}(\phi) = \{ (z,t) \in\R^3 : t>\phi(z)\}$, where $\phi\in
C^\infty(\R^2)$ is an $m$-admissible function,
and we construct a nontrivial John curve starting from any point
$p=(z,t)\in\Omega$.
The case of a bounded domain is discussed in Section \ref{ultimissima}.

For any point $z\in \R^2$ with $|ZF(z)|\neq 0$ there exists  a unit
vector $(u,v)\in \mathbb S^1 \subset \R^2$ 
such that 
\begin{equation} \label{MAX}
  -(uX+vY)F(z)>\frac 12 |ZF(z)|,
\end{equation}
where  $F(z,t)= \phi(z)-t$.
Our John curve starting from $(z,t) \in \mathrm{epi}(\phi)$ is the integral
curve of $-(uX+vY)$ for times  $s \in [0,\bar s]$, where the time $\bar s = \bar
s(z)$ is 
\begin{equation}\label{tempogiusto} 
  \bar  s =\e_0\frac{|ZF(z)|}{\abs{z}^{2m}}, 
\end{equation} 
and $\e_0>0$ is a suitable    constant. For $s\geq \bar s $, the John 
curve  is an integral curve of $\partial / \partial t$. This piece of curve
is nonrectifiable.
When  $(z,\phi(z))\in\mathrm{gr}(\phi)  $ is a characteristic point
of $\mathrm{gr}(\phi)$,
i.e.~$|ZF(z)|=0$,
we have  $\bar s  = 0$ and the first piece of the curve 
disappears.

By the rotational invariance \eqref{rotoliamo} of the metric   $d$, in
\eqref{MAX} we can assume
that $u=1$ and $v=0$.

\begin{theorem} \label{giovannino} Let $\phi\in C^\infty(\R^2)$ be
a function  satisfying \eqref{terzina}. There exist constants $\e_0>0$ and 
$\la>0$ such that
for any $z\in \R^2$ with 
 \begin{equation}\label{massimale} 
-XF(z) 
>\frac 12 |ZF(z)|, 
\end{equation}  
the curve $\gamma:[0,\infty)\to \R^3$  
 \begin{equation*}
  \gamma(s)= 
  \left\{
\begin{array}{ll}
  e^{sX}(z,t) 
, & \text{if $\quad 0\leq s\leq \bar s =\e_0\frac{|ZF(z)|}{\abs{z}^{2m}}$}
\\
   \gamma(\bar s) + (0, s-\bar s),
&
   \text{if $\quad \bar s \leq s<\infty$}
\end{array}
  \right.
 \end{equation*} 
is a John curve in $ \mathrm{epi}(\phi)$ with parameter $\la$ starting
from $(z,t)\in \mathrm{epi}(\phi)$.
\end{theorem}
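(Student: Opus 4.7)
The plan is to verify that $B(\gamma(s),\lambda D_s)\subset\mathrm{epi}(\phi)=\{F<0\}$ for all $s>0$, where $F(z,t):=\phi(z)-t$ and $D_s:=\operatorname{diam}(\gamma|_{[0,s]})$. The principal tool is that $F$ is $|ZF|$-Lipschitz for $d$: along any horizontal curve $\alpha$ with $\dot\alpha=aX+bY$, $a^2+b^2\le 1$, we have $\frac{d}{d\tau}F(\alpha)=aXF+bYF$, hence
\[
|F(q)-F(p)|\le d(p,q)\cdot\sup_{\text{path}}|ZF|.
\]
Thus it suffices to show that $|F(\gamma(s))|$ dominates $\lambda D_s\cdot\sup_{B(\gamma(s),\lambda D_s)}|ZF|$, where the supremum is controlled via Corollary~\ref{scatola} (shape of the CC-ball) and the pointwise bound \eqref{wow}.

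\emph{Horizontal piece} $s\in[0,\bar s]$. Admissibility \eqref{terzina} gives $|\nabla XF|\le C|z|^{2m}$ in $B_{\Eucl}(z,|z|/2)$, so $|XF(z_\tau)-XF(z)|\le C|z|^{2m}\tau\le C\e_0|ZF(z)|$ for $\tau\in[0,\bar s]$. Taking $\e_0$ small, the hypothesis \eqref{massimale} forces $XF(z_\tau)\le-\tfrac14|ZF(z)|$, and integrating with $F(\gamma(0))\le 0$ yields $F(\gamma(s))\le-\tfrac14|ZF(z)|s$. By Remark~\ref{diametrale}, $D_s\simeq s$; and for $q\in B(\gamma(s),\lambda s)$, Corollary~\ref{scatola}(i) gives $|z_q-z|\le C\e_0|z|$, so $|z_q|\simeq|z|$ and $|ZF(z_q)|\le|ZF(z)|+C|z|^{2m}\lambda s\le 2|ZF(z)|$. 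Then $F(q)\le(-\tfrac14+2\lambda)|ZF(z)|s<0$ for $\lambda<\tfrac18$.

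\emph{Vertical piece} $s\ge\bar s$. Here $F(\gamma(s))=F(\gamma(\bar s))-(s-\bar s)\le-\tfrac14|ZF(z)|\bar s-(s-\bar s)$, and Theorem~\ref{eqq} gives $D_s\simeq\bar s+\min\{(s-\bar s)^{1/(2m+2)},(s-\bar s)^{1/2}/|z|^m\}$. Two subcases. If $D_s\ge c|z|$, Corollary~\ref{scatola}(ii) yields $|z_q|\le CD_s$ and thus $|ZF(z_q)|\le CD_s^{2m+1}$; combined with $s-\bar s\gtrsim D_s^{2m+2}$ this gives $|F(q)-F(\gamma(s))|\le C\lambda D_s^{2m+2}\le C\lambda|F(\gamma(s))|$, which closes for small $\lambda$. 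If $D_s\le c|z|$, Corollary~\ref{scatola}(i) gives $|z_q|\simeq|z|$ and
\[
|F(q)-F(\gamma(s))|\le\lambda D_s|ZF(z)|+C\lambda D_s^2|z|^{2m}.
\]
Exploiting the identity $|ZF(z)|=\e_0^{-1}|z|^{2m}\bar s$ built into the definition of $\bar s$, the first term equals $\e_0^{-1}\lambda|z|^{2m}D_s\bar s$: for $D_s\le\bar s$ it is dominated by $\e_0^{-1}\lambda\bar s|ZF(z)|$ and absorbed by $\tfrac14\bar s|ZF(z)|$ when $\lambda\ll\e_0$, while for $D_s>\bar s$ it is $\le\e_0^{-1}\lambda D_s^2|z|^{2m}\lesssim\e_0^{-1}\lambda(s-\bar s)$ and absorbed by $s-\bar s$. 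The second summand is handled analogously.

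\emph{Main obstacle.} The delicate regime is the vertical piece with $\bar s\lesssim D_s\lesssim|z|$, where $|ZF(z)|$ can be much larger than $|z|^{2m}D_s$ and the crude product $\lambda D_s\sup|ZF|$ is not controlled by the $t$-drop $(s-\bar s)$ alone. Closing the estimate requires using both contributions $\tfrac14\bar s|ZF(z)|$ and $(s-\bar s)$ to $|F(\gamma(s))|$ together with the equality $|ZF(z)|=\e_0^{-1}|z|^{2m}\bar s$, as well as the ball-box dichotomy of Corollary~\ref{scatola}. This forces the parameters to be chosen in the order $\e_0\ll 1$ first, then $\lambda\ll\e_0$, to produce a single constant $\lambda$ that works uniformly in $s$ and $z$.
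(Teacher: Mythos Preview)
Your approach differs genuinely from the paper's and is essentially correct. The paper works directly with the box parameterization: for $p\in\Box_I(\gamma(s),\sigma_\lambda D_s)$ (and $\Box_J$ in the second step) it writes out the $t$-coordinate of $p$ explicitly, Taylor-expands $\phi(z_s+v)-\phi(z)$ via \eqref{marrone}, and verifies $F(p)<0$ by comparing the expansion term by term with the $t$-coordinate. You instead bound $|F(q)-F(\gamma(s))|$ by $\lambda D_s\cdot\sup_B|ZF|$ along a short horizontal curve; this is conceptually cleaner and avoids the coordinate bookkeeping, at the cost of estimating $\sup|ZF|$ via the Lipschitz bound $|\nabla ZF|\lesssim|z|^{2m}$ together with Corollary~\ref{scatola}. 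Both routes ultimately rest on the identity $|ZF(z)|=\e_0^{-1}|z|^{2m}\bar s$ and the ball-box dichotomy.

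There is one real gap, in the vertical piece, subcase $D_s\le c|z|$. First, the case ``$D_s\le\bar s$'' is vacuous: for $s\ge\bar s$ one always has $D_s\ge D_{\bar s}\ge\bar s$ by Remark~\ref{diametrale}. Second, in the remaining case your claim $D_s^2|z|^{2m}\lesssim s-\bar s$ is false: for $s$ just above $\bar s$ one has $D_s\simeq\bar s$ while $s-\bar s=\tau^{2m+2}$ is arbitrarily small. The fix is exactly what your ``Main obstacle'' paragraph announces but the argument itself does not carry out. Write $D_s\simeq\bar s+m_\tau$ with $m_\tau=\min\{\tau,\tau^{m+1}/|z|^m\}$; in this subcase $m_\tau\le D_s\le c|z|$ (with $c<1$) forces $\tau\le|z|$, hence $m_\tau^2|z|^{2m}=\tau^{2m+2}=s-\bar s$. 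Then
\[
\e_0^{-1}\lambda|z|^{2m}D_s\bar s\;\lesssim\;\e_0^{-1}\lambda|z|^{2m}(\bar s^2+m_\tau^2)\;=\;\lambda\bar s|ZF(z)|+\e_0^{-1}\lambda(s-\bar s),
\]
and each summand is absorbed by the corresponding piece of $|F(\gamma(s))|$ once $\lambda\ll\e_0$; the second summand $C\lambda D_s^2|z|^{2m}$ is handled identically. (Minor slip in the horizontal piece: $|z_q-z|$ is of order $s$, not $\lambda s$, because of the drift from $z$ to $z_s$; but then $|ZF(z_q)|\le|ZF(z)|+C|z|^{2m}s\le(1+C\e_0)|ZF(z)|$ and the conclusion is unchanged.)
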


Let  $\lambda>0$ be the 
 parameter of our John curves. 
 In the proof of the theorem and in the following sections, 
we denote  by~$\s_\la$ any constant  of the form $C\lambda^\b$, where $C$ is an
absolute constant and $\b >0$ is a positive power.

\begin{proof}
Without loss of generality, we prove the claim for $t= \phi(z)$, i.e.,
we construct a John curve in the epigraph of $\phi$ starting from a 
boundary point. In the following we let $z_s = z+ se_1$. 
When $s\in [0,\bar s]$, an explicit formula for $\gamma(s)$ is 
 \begin{equation*} 
  \gamma(s)= e^{sX}(z,\phi(z) ) 
           = \Bigl( z_s, \phi(z)+  y\int_0^s
\abs{z_\r}^{2m}d\r \Bigr).
 \end{equation*} 
The definition in \eqref{tempogiusto} for $\bar s $ implies that 
$\bar  s \leq\e_0 C\abs{z}$, where $C>0$ is the constant appearing in
\eqref{wow}.
We can choose  $\e_0>0$ such that   $\e_0 C <\frac 12$.
Then, for any $s\in [0,\bar s ]$ we have
 \begin{equation}
\label{lazzeta} 
  \frac{\abs{z}}{2}\leq   \abs{z_s} \leq\frac 32|z|.
 \end{equation} 
Further conditions on $\e_0$ will be required below.

\step{Step 1.} We claim that  for any sufficiently small $\la>0$
we have $B(\gamma(s),
\lambda s)\subset \mathrm{epi}(\phi)$ for all    $0< s \leq  \bar s  $.

From $s\leq \bar s  \leq \frac 12 |z|$ and Corollary \ref{scatola}, we have the
inclusion  $ B(\gamma(s), \lambda s)\subset \Box_I(\gamma(s), \sigma_\la  s)$
for any  $0<\la\leq 1$, where $\sigma_\la = b_1\la$ and $b_1$ is the constant
given by Corollary \ref{scatola}. So our claim is
implied by $\Box_I(\gamma(s), \sigma_\la  s)\subset \mathrm{epi}(\phi)$.
We have $p \in  \Box_I(\gamma(s), \sigma_\la  s)$ if and only if 
 \begin{equation*}
 p =  \Big (z_s+v , \phi(z)+  y\int_0^s |z_\r |^{2m} d\r  +  
 |z_s |^{2m } \big(u_3 + \omega(v,z_s) \big)  \Big)  ,
 \end{equation*}
with $\norm{u}_{1,1,2}\leq \s_\la s$ and $u = (u_1,u_2,u_3) = (v,u_3) $.
Then the claim in Step 1 is implied by the inequality 
 \begin{equation}\label{lot2} 
\begin{aligned}
 \phi( z_s+ v)- \phi(z)< y\int_0^s |z_\r |^{2m} d\r +
 |z_s|^{2m } \big(u_3 + \omega(v,z_s)  \big)=: H,
\end{aligned}
\end{equation}
for  $\norm{u}_{1,1,2}<\s_\la s$ and $s\leq \bar s $. 
The left-hand side of \eqref{lot2} can be expanded using~\eqref{marrone}:
\[
 \phi( z_s+ v)- \phi(z) =\langle ZF(z), v_s\rangle + \abs{z}^{2m}
\omega(v_s,z) +  \mu_{z,z_s} ^{2m} O(|v_s|^2 ).
\]
By \eqref{massimale},  $|z_s|\leq C|z|$, and    $u_1+s\geq 0$ we get
\[
 \phi( z_s+ v)- \phi(z) \leq |ZF(z)| \Big( -\frac 12  (u_1+s) +   |u_2|\Big)  +
\abs{z}^{2m}
\omega(v_s,z) + C|z|^{2m} |v_s|^2.
\]
Since $|u_2|\leq \s_\la s$ and $|u_1|\leq \s_\la s$, we have $|v_s |\leq C s$
and for small $\lambda>0$, we get 
\begin{equation} 
\label{pico}
 \phi( z_s+ v)- \phi(z) \leq -\frac s 4 |ZF(z)|  +
\abs{z}^{2m}
(\omega(v,z)+sy+C_0 s^ 2)   .
\end{equation}

Now we estimate the quantity $H$ in the right-hand side of  \eqref{lot2}.
Since $\r\leq  s\leq \bar s \leq \frac 12|z|$, and $u_3\geq  - \sigma_\lambda
 s^2$  we  have the elementary inequalities
\begin{equation}\label{cuccurullo} 
 \begin{aligned}
 & 
\big||z_\r |^{2m} -|z|^{2m}\big| \leq C|z|^{2m-1}\r,
\\
&
|z_s |^{2m}u_3  \geq -C|z|^{2m}\s_\la   s^2, \quad \text{and }
\\ 
&
|z_s |^{2m}\omega(v,z_s)
\geq |z|^{2m}\omega(v,z_s)-C|z|^{2m} s^2.
\end{aligned}
\end{equation}
Thus, we obtain 
\begin{equation}
 \label{paco}
\begin{aligned}
 H & \geq   |z|^{2m} (\omega(v,z)+y s  -C_1   s^2)  
\end{aligned}
\end{equation}
for a suitable absolute constant $C_1$.

By \eqref{pico} and \eqref{paco},  the claim \eqref{lot2} is then implied by the
inequality
 \begin{equation*}
 \begin{aligned}
(C_0+C_1)|z|^{2m} s<\frac{1}{4}|ZF(z)| ,
\end{aligned}
\end{equation*} 
that 
holds for all $s\leq \bar s = 
\e_0 \frac{|ZF(z)|}{|z|^{2m}}$ as soon as $\e_0\leq\frac{1}{4(C_0+C_1)}$.

\step{Step 2. } For $s\geq  \bar s $, the curve  $\gamma$ is defined
by the formula 
 \begin{equation*}
 \gamma(s)=\Bigl(  z_{\bar s} , \phi(z)+y\int_0^{\bar s }\abs{z_\r }^{2m}d\r +
  s-\bar s  \Bigr).
\end{equation*}   
In the following we let   $s-\bar s =\t^{2m+2}$. As a
function of $\tau$, the
diameter of $\gamma$ restricted to $[0,s]$ satisfies  
\begin{equation*}
\Delta_\tau = \begin{aligned}
\operatorname{diam}(\gamma|_{[0, \bar s +\t^{2m+2}]}) & 
\leq C\Big(  \bar s  +\min\Big\{\t, \frac{\t^{m+1}}{\abs{z}^m}\Big\} \Big)
=: \bar s +m_\t .
\end{aligned}
\end{equation*}
The proof of  Theorem~\ref{eqq} shows in fact that the inequality above is a
global equivalence for $\ol s,\tau\in\left[0,+\infty\right[$.

We claim that  for any sufficiently small $\la>0$
we have $B(\gamma(s),
\lambda \Delta_\tau )\subset \mathrm{epi}(\phi)$ for all    $\t\geq 0$. 
By Corollary \ref{scatola}, this claim is equivalent to
\begin{align}
 &\Box_I(\gamma(s), \s_\lambda \Delta_\t)\subset\mathrm{epi}(\phi) \label{uu}
\quad  
\text { and }
\quad \Box_J(\gamma(s), \s_\lambda \Delta_\t)\subset\mathrm{epi}(\phi)  .
\end{align}

We prove the inclusion in the left-hand side of \eqref{uu}. 
We have  $p\in \Box_I(\gamma(s), \s_\lambda \Delta_\t)$ 
if and only if   
\begin{equation}
\label{cok} 
 p = \Bigl( z_{\bar s} +v , 
 \phi(z)+
 y\int_0^{\bar s }\abs{z_\r }^{2m}d\r
 +\abs{z_{\bar s} }^{2m}\big(u_3+ \omega(v, z_{\bar s}) \big) +\t^{2m+2}
 \Bigr),
\end{equation}
with $\norm{u}_{1,1,2}\leq \s_\la \Delta_\tau $ and $u = (u_1,u_2,u_3) = (v,u_3)
$. The point $p$ belongs to $\mathrm{epi}(\phi)$ if 
\begin{equation*}
\begin{aligned}
L : =  \phi(z_{\bar s} +v )
 - \phi(z) <   y\int_0^{\bar  s}\abs{z_\r}^{2m}d\r 
 +\abs{z_{\bar s} }^{2m}\big(u_3+\omega(v, z_{\bar s})\big)+\t^{2m+2} = : R
 .
\end{aligned}
\end{equation*}

First we get an upper bound for $L$.  Observe first 
that assumption~\eqref{massimale}
  and   the inequalities
    $\bar s\leq\frac 12 \abs{z}$
  and $\abs{v}\leq
\s_\la(\bar s+m_\t)$ give    
\begin{equation*}
\begin{aligned}
 \langle ZF(z), v_{\bar s} \rangle 
 \leq |ZF(z)| \Big( -\frac 14   \bar s+\s_\la m_\t\Big) .
\end{aligned}
\end{equation*}
 Thus, using formula  \eqref{marrone}
we obtain 
\begin{equation}\label{giostraio} 
\begin{aligned}
 L  & = \phi(z+ v_{\bar s} ) - \phi(z) = \langle ZF(z), v_{\bar s} \rangle 
+ \abs{z}^{2m}  \omega (v_{\bar s}, z)  
+ \max\big\{ |z|^{2m},|v_{\bar s}  |^{2m}\big\} O\big(\abs{v_{\bar s} }^2\big)
\\&
\leq |ZF(z)| \Big( -\frac 14   \bar s+\s_\la m_\t\Big)  
+ \abs{z}^{2m}  \omega (v_{\bar s}, z)   +C\big(|z|^{2m}+\s_\la
m_\t^{2m}\big)
(\bar s^2+\s_\la^2 m_\t^2) 
\\&
\leq  |ZF(z)| \Big( -\frac 14   \bar s+\s_\la m_\t\Big)  
+ \abs{z}^{2m}   \Big( \omega (v , z)  +\bar s y
+C_0 \bar s^2 + \s_\la  m_\t^2\Big) +\s_\la m_\t^{2m+2} .
\end{aligned}
\end{equation} 

\noindent 
We compute a lower bound for the right-hand side $R$.
Using  \eqref{cuccurullo} we get 
\begin{equation}\label{caccona} 
\begin{aligned}
 R & =  y\int_0^{\bar s}\abs{z_\r}^{2m}d\r+|z_{\bar s}|^{2m }
 \big( u_3+\omega(v, z_{\bar s} )  \big) +\t^{2m+2} 
\\&
\geq y|z|^{2m}\bar s -  C |z|^{2m}\bar s^2 +\t^{2m+2} 
 -\sigma_\lambda |z|^{2m}  m_\t^2
+|z|^{2m} (\omega(v,z )  - u_2\bar
s )
\\
&
\geq 
\t^{2m+2} +|z|^{2m} \Big( \omega(v,z ) + \bar s y-  C_1 \bar s^2 
-\sigma_\lambda \bar   m_\tau^2 \Big  ).
 \end{aligned}
\end{equation} 
Then, the  inequality  $L< R$ follows  from  
\begin{equation}
 \label{gagliardo}
 \begin{aligned}
 \s_\la m_\t  |ZF(z)| 
+ \abs{z}^{2m}   \Big( C_2 \bar s^2 + \s_\la  m_\t^2 \Big) +\s_\la m_\t^{2m+2} 
<\frac 14 \bar s |ZF(z)|+
\t^{2m+2}   .\end{aligned}
\end{equation} 
To  prove \eqref{gagliardo}, we  start from the second term.
By the definition of $\ol s$, we have 
\begin{equation*}
C_2 \bar s^2 |z|^{2m} =
   \e_0 C_2 |ZF(z)|  \bar s\leq \frac 14 |ZF(z)| \bar s ,
\end{equation*}
 as soon as $\e_0$ satisfies  $\e_0 C_2<1/4$.
This is the last time we modify the choice of $\e_0$.

Next we look at the first term. Observe that
\begin{equation*}
\begin{aligned}
\s_\la  |ZF(z)|m_\t = \s_\la
|ZF(z)|\min\Big\{\t,\frac{\t^{m+1}}{|z|^m}\Big\}\leq 
\s_\la |ZF(z)|\frac{\t^{m+1}}{|z|^m}
 \leq \s_\la \Big( \frac{|ZF(z)|^2}{|z|^{2m}}+\t^{2m+2}\Big).
\end{aligned}
\end{equation*}
Then, since $\frac 14 |ZF(z)| \bar s+\t^{2m+2}
=\frac{\e_0}{4}\frac{|ZF(z)|^2}{|z|^{2m}}+\t^{2m+2}$, we can finish
the estimate  as soon as $\s_\la$ is small with respect to absolute 
constants (which include $\e_0$, now). 

The estimate of the third term is easy:
\begin{equation*}
 \s_\la \abs{z}^{2m} m_\t^2 =\s_\la |z|^{2m} \Big( \min\Big\{\t,\frac{\t^{m+1}}{|z|^m}\big\}\Big)^2
 \leq \s_\la \t^{2m+2},
\end{equation*}
which is correctly estimated, provided that $\s_\la$ is small enough.
Finally,  we have
\begin{equation*}
 \s_\la m_\t^{2m+2}=\s_\la\Big( \min\Big\{\t,\frac{\t^{m+1}}{|z|^m}\Big\}\Big)^{2m+2}
 \leq \s_\la\t^{2m+2},
\end{equation*}
which again satisfies the required estimate.

To conclude the proof, we have to check the inclusion  in the right-hand
side of \eqref{uu}.
In this case the box $\Box_J(\gamma(s), \s_\la(\bar s+m_\t)) $ is made  of
points of the form
\begin{equation*} 
 \Bigl(  z_{\bar s}+v, 
 \phi(z)+
 y\int_0^{\bar s}\abs{z_\r }^{2m}d\r +\t^{2m+2}
+u_3 +\abs{z_{\bar s}}^{2m} \omega (v,z_{\bar s})   
 \Bigr).
\end{equation*}
The unique difference with   \eqref{cok} is that the term $u_3$ replaces the
term
$\abs{z_{\bar s }}^{2m}
u_3$, and now $|u_3|\leq\s_\la(\bar s+m_\t)^{2m+2}$.

The estimate from above for   $L $ remains unchanged, because it does
not involve $u_3$. In the estimate from below for $R$, we need the following evaluation for
the term $u_3$:
 \begin{equation*}
  u_3\geq -\s_\la(\bar s+m_\t)^{2m+2}\geq -\s_\la |z|^{2m}\bar s^2-\s_\la
m_\t^{2m+2}.
 \end{equation*}
Therefore,  the inequality  \eqref{gagliardo} remains unchanged and the proof 
can be  concluded arguing as in the previous case.  
\end{proof}

\section{Uniform property of entire  admissible 
epigraphs}\label{disconosco}

We recall the definition of a uniform domain, specialized to the metric
space
$(\R^3,d)$.

 \begin{definition}
\label{UNI}
 An open set  $\Omega\subset \R^3 $ is a uniform domain if there exist
$\e >0$ and $\delta>0$  with the following property. For any
pair of points $x,y\in\Omega$ there is a continuous curve $\gamma:[0,1]\to\Omega
$ such that
$\gamma(0)=x$, $\gamma(1)=y$, 
  \begin{equation}
   \label{penna} 
\diam(\gamma)  \leq \delta^{-1}d(x,y),
  \end{equation}
and, letting $\Delta_t = \min\{\diam(\gamma|_{[0,t]}),
\diam(\gamma|_{[ t,1]})\}$, for any $t\in [0,1]$ we have
  \begin{equation}
\label{sailor}   
B  ( \gamma(t),\e \Delta_t  )\subset \Omega .
  \end{equation}
 
 \end{definition}

Uniform domains are also known as  $(\e,\delta)$-domains.
As for John domains, the curves in our definition  are not required to be 
rectifiable. By the results
of \cite{MartioSarvas79}, this   is equivalent to the more standard definition
which requires rectifiability.

We consider an  unbounded  domain of the epigraph type $\Omega
= \mathrm{epi}(\phi) = \{ (z,t) \in\R^3 : t>\phi(z)\}$, where $\phi\in
C^\infty(\R^2)$ is an $m$-admissible function. 
For any pair of points $p,q \in \Omega$,  we construct a  curve connecting them 
and satisfying the conditions \eqref{penna} and \eqref{sailor} with uniform constants $\delta$ and $\e$.
The case of a bounded domain is discussed in Section \ref{ultimissima}.

\begin{theorem}
 \label{buonino}
Let $\phi\in C^\infty(\R^2)$ be a function satisfying  
\eqref{terzina}. Then, the epigraph $\Omega= \mathrm{epi}(\phi)$ 
is a uniform domain.
\end{theorem}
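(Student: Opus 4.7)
The natural strategy is a ``double John cone'' construction based on Theorem \ref{giovannino}. Given $p,q\in\Omega$ with $r:=d(p,q)$, I would build a connecting curve $\Gamma=\Gamma_p*\eta*\Gamma_q^{-1}$, where $\Gamma_p,\Gamma_q$ are the John curves produced by Theorem \ref{giovannino} starting from $p,q$ (or from their nearest boundary points), run until their CC-diameter equals $Kr$ for a large absolute constant $K$, and $\eta$ is a short CC-geodesic joining the two ``top'' endpoints $p^*,q^*$.

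First, I would dispose of the easy case $\min\{d(p,\p\Omega),d(q,\p\Omega)\}\ge\kappa r$: any CC-geodesic from $p$ to $q$ works, because along it $\Delta_t\le r/2$ while the triangle inequality gives $d(\gamma(t),\p\Omega)\gtrsim\kappa r-\Delta_t\gtrsim\Delta_t$ for $\kappa$ large. In the general case, replace $p$ by a nearest boundary point $p_0=(z_p,\phi(z_p))$ (attaching a short geodesic segment of length $\lesssim r$), apply the rotational invariance \eqref{rotoliamo} so that the direction satisfying \eqref{massimale} at $p_0$ is $X$, and invoke Theorem \ref{giovannino} to get the John leg $\Gamma_p$; do the same for $q$.

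To verify Definition \ref{UNI}: the diameter bound \eqref{penna} is a telescoping estimate $\mathrm{diam}(\Gamma)\le 2\kappa r+2Kr+d(p^*,q^*)\lesssim r$. On each leg the ball condition \eqref{sailor} follows from the John property of Theorem \ref{giovannino}, noting that for times $t$ in the first half of $\Gamma$ one has $\Delta_t\le C\,\mathrm{diam}(\Gamma_p|_{[0,t]})$, so $\epsilon=\lambda/C$ suffices. On the middle arc $\eta$, the endpoints $p^*,q^*$ satisfy $d(p^*,\p\Omega),d(q^*,\p\Omega)\gtrsim \lambda Kr$ by the John property; if $K$ is chosen large compared to the constant $C$ in the distance bound $d(p^*,q^*)\le Cr$, every point of $\eta$ stays at CC-distance $\gtrsim(K-C)r\gtrsim\Delta_t$ from $\p\Omega$ by the triangle inequality, giving \eqref{sailor}.

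The main obstacle is precisely the quantitative stability estimate $d(p^*,q^*)\le C\,d(p,q)$ mentioned in the introduction (``cones with close vertices have quantitatively close axes''). The stopping time $\bar s=\e_0|ZF(z)|/|z|^{2m}$ and the John direction $(u,v)$ selected by \eqref{MAX} both depend on $z$ in a way that can be sensitive when one is near the characteristic set or when $|ZF(z)|/|z|^{2m}$ is small compared to $r$. The plan is to split into cases according to the relative sizes of $r$, $|z_p|$, and the characteristic scales $|ZF(z_p)|/|z_p|^{2m}$, $|ZF(z_q)|/|z_q|^{2m}$: when both starting points are ``comparably characteristic'' one matches the directions modulo a small angle using \eqref{rotoliamo}; when one of them has $\bar s=0$ (a truly characteristic vertex), the corresponding leg is purely vertical and the estimate reduces to comparing the vertical ascents via the explicit height formula in Theorem \ref{giovannino} together with the ball-box estimate of Corollary \ref{scatola}. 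In each case, expanding $\phi$ at order two via \eqref{macchia2} and using the flatness \eqref{terzina} should yield the desired bound on the $v$-component of $\delta(p^*,q^*)$ from \eqref{distanza}, which via Theorem \ref{eqq} translates into $d(p^*,q^*)\lesssim r$.
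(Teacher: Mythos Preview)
Your plan is essentially the paper's own: a double John-cone construction joined by a short geodesic at the top, with the stability estimate $d(p^*,q^*)\lesssim d(p,q)$ correctly identified as the crux. The paper organizes this via a single clean dichotomy: whether $d(p,q)$ is small (Case~A) or large (Case~B) compared to $\max\{|ZF(z)|/|z|^{2m},|ZF(\zeta)|/|\zeta|^{2m}\}$.

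In Case~A the paper stops both legs at time $\hat s=H\,d(p,q)<\bar s$, still on the horizontal piece; the key fact (which you phrase as ``matching the directions modulo a small angle'') is that the map $z\mapsto ZF(z)/|z|^{2m}$ is globally Lipschitz by \eqref{terzina}, so the $X$-direction selected at $z$ also satisfies \eqref{massimale} at $\zeta$ with constant $\tfrac14$. One then uses literally the \emph{same} vector field for both legs, and a direct expansion of the $v$-coordinate in \eqref{distanza} (splitting $\Theta=\Theta_1+\Theta_2+\Theta_3$) gives $d(\gamma_z(\hat s),\gamma_\zeta(\hat s))\le\tfrac{\lambda}{2}\hat s$.

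In Case~B the paper runs both legs by the \emph{same} vertical increment $\tau^{2m+2}$ past their respective $\bar s$ and $\bar{\bar s}$. Since vertical translation is an isometry by \eqref{rotoliamo2}, this gives $d(\gamma_z(\bar s+\tau^{2m+2}),\gamma_\zeta(\bar{\bar s}+\tau^{2m+2}))=d(\gamma_z(\bar s),\gamma_\zeta(\bar{\bar s}))$, and the right-hand side is bounded by the triangle inequality and the Case~B hypothesis --- no Taylor expansion is needed there at all. Your proposal to stop both legs at equal \emph{diameter} $Kr$ (rather than equal vertical increment) would forfeit this cancellation and force an additional estimate; also, your suggested sub-cases (``comparably characteristic'' vs.\ ``one has $\bar s=0$'') do not cleanly cover the situation where both $\bar s,\bar{\bar s}$ are positive but small compared to $d(p,q)$, which is exactly what Case~B handles.
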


\begin{proof}
Let $p=(z,\phi(z)+b)$ and $q=(\z,\phi(\z)+\b)$, with $b,\beta > 0$, be points
in the epigraph of $\phi$. We can  without loss of generality assume that 
 \begin{equation}\label{massimino} 
\frac{|ZF(z)|}{\abs{z}^{2m}}  =\max                            
\Big\{ \frac{|ZF(z)|}{\abs{z}^{2m} }, \frac{|ZF(\z)|}{\abs{\z}^{2m}} \Big\} \geq
0.
\end{equation}
The  maximum can be $0$,  even for arbitrarily close points. This happens for
instance in Example \ref{esempietto}.   
 By assumption \eqref{terzina}   we can define continuously $\frac{|ZF(z)|}{|z|^{2m}}=0$ for $z=0$.

 Let $\mu>0$ be a parameter that will be fixed along the proof.
We distinguish two cases:
 \begin{equation*}
  \begin{aligned}
d(p,q)< \mu \max\Big\{ \frac{|ZF(z)|}{\abs{z}^{2m}}, \frac{|ZF(\z)|}{\abs{\z}^{2m}}
\Big\}\qquad&\text{(Case A);}
\\ 
d(p,q)\geq  \mu \max\Big\{ \frac{|ZF(z)|}{\abs{z}^{2m}},
\frac{|ZF(\z)|}{\abs{\z}^{2m}} \Big\}\qquad&\text{(Case B).}
\end{aligned}
 \end{equation*} 
 Roughly speaking, the maximum appearing in the right-hand side describes  quantitatively ``how much'' the involved points are close to the characteristic set.  In case~A, where $d(p,q)$ is much smaller than such maximum, the first pieces of the John curves from $p$ and $q$ are ``parallel'' and the curve realizing the uniform condition is constructed using the first pieces of $\gamma_z$ and $\gamma_\z$. In Case B, where the distance among  $p$ and $q$ is large,  the curve realizing the uniform condition is constructed using both pieces of the John curves starting from $p $ and $q$.
\color{black}

 We can without loss of generality assume that 
 \eqref{massimale}  holds at the point $z$, i.e.:  $-XF(z)=\abs{XF(z)}>\frac 12
|ZF(z)|$. Then, if we denote by $\e_0$ and 
$\lambda>0$   the  parameters  fixed in Section~\ref{CONO}, we know that 
the curve 
 \begin{equation*}
  \gamma_z(s)= 
  \left\{
\begin{aligned}
& \Bigl( z_s,  \phi(z) +b+ y\int_0^{s }\abs{z_\r}^{2m}d\r
\Bigr),\quad\text{if
$ s\leq \bar s = \e_0 \frac{|ZF(z)|}{\abs{z}^{2m}} $}
\\& 
\Bigl( z_{\bar s},  \phi(z) +b+ y\int_0^{\bar s }\abs{z_\r}^{2m}d\r
+s-\bar s \Bigr),\quad\text{if
$ s\geq \bar s$},
\end{aligned}
  \right.
 \end{equation*} 
is a John curve with parameter $\la$.

 \step{Analysis of Case A.} We claim that there exists $\mu>0$ such that the
curve 
\begin{equation*}
  \gamma_\z(s)= 
  \left\{
\begin{aligned}
& \Bigl( \z_s,  \phi(\z) +\beta + \eta \int_0^{s }\abs{\z_\r}^{2m}d\r
\Bigr),\quad\text{if
$ s\leq \bar{\bar s} = \e_0 \frac{|ZF(\z)|}{\abs{\z}^{2m}} $}
\\& \Bigl( \z_{\bar{\bar s}},  \phi(\z) +\beta + \eta \int_0^{\bar{\bar s}}
\abs{\z_\r}^{2m}d\r
+s-\bar{\bar s}  \Bigr),\quad\text{if
$ s\geq \bar{\bar s} $}
\end{aligned}
  \right.
 \end{equation*} 
is a John curve with parameter $\la$. To prove this claim, it suffices to
show 
that $- XF(\z) >\frac 14 |ZF(\z)|  $ if  Case A holds and  $\mu$ is small
enough.

From \eqref{terzina} it  follows that  $|\nabla Z F(z)| \leq
C|z|^{2m}$ for all $z\in \R^2$ and thus 
the function  $z\mapsto
 {|ZF(z)|} / {|z|^{2m}}$ is globally
Lipschitz continuous on $\R^2$. 
Let $L$ be the  Lipschitz constant. By \eqref{massimino} and by  the Case A with
sufficiently small $\mu$, we have 
\begin{equation*}
\begin{aligned}
 - \frac{XF(\z)}{|\z|^{2m}}& \geq  -\frac{XF(z)}{|z|^{2m}}-L|\z-z|
 \geq \frac 12 \frac{|ZF (z)|}{|z|^{2m}} - L d(p,q)
 \geq \frac 12 \frac{|ZF (z)|}{|z|^{2m}} - L\mu  \frac{|ZF (z)|}{|z|^{2m}}
 \\&
 \geq \frac 14 \frac{|ZF (z)|}{|z|^{2m}} 
 \geq\frac 14\frac{|ZF (\z)|}{\abs{\z}^{2m}}.
\end{aligned}
\end{equation*} 
Also the mapping $z\mapsto \bar s (z)$ in \eqref{tempogiusto}  is Lipschitz
continuous. Then, for $\mu$  small enough, in the  Case A the times $\bar s=
\bar s(z)$ and $\bar{\bar s} = \bar s(\z)$ satisfy
\begin{equation}\label{percorrenza} 
 \frac 12 \bar s  \leq \bar {\bar s}  \leq \frac 32 \bar s.
\end{equation}
Finally,  we also have
 $\abs{\z-z}\leq  d(p,q)\leq  \mu \frac{|ZF(z)|}{\abs{z}^{2m}}\leq C \mu
\abs{z}\leq\frac 12 \abs{z}$, for $\mu$ sufficiently   small.

 We are now ready to define the curve joining $p$ and $q$ and satisfying~\eqref{penna},~\eqref{sailor}.
For a suitable $H>0$, let 
 \begin{equation}\label{essesegnato} 
\wh  s=  H d(p,q).
 \end{equation}   
Then, the curve  $\gamma$ is the concatenation  of  $\g_z\big|_{[0,\wh s]}$, a
length-minimizing path $\wh\gamma$ joining  $\wh\gamma(0)=\g_z(\wh s)$ and
$\wh\gamma(1)=\gamma_\z(\wh s)$, and the  opposite  of $\g_\z\big|_{[0,\wh s]}$.

We claim that 
there exist $H>0$ and $\mu>0$ such that the curve $\gamma$ 
satisfies~\eqref{penna} and~\eqref{sailor}.

We preliminarily show that:
\begin{enumerate}[noitemsep,label={(\roman*)} ]
\item \label{itemuno}  $\wh s \leq \min\{ \bar s, \bar{\bar s}  \}$,  i.e.,  the
points $\gamma_z(\wh s)$ and $\gamma_\z(\wh s)$ belong to the first piece
of the curves $\gamma_z$ and $\gamma_\z$, respectively;
\item \label{itemdue} $d(\gamma_\z(\wh
s), \gamma_z(\wh s))\leq \frac{\lambda}{2}  \wh s$, where  $\lambda $  is the
John constant of $\gamma_\z$ and $\gamma_z$;   
\item \label{itemtre}  $\operatorname{diam}(\gamma)\leq C
d(p,q)$.
\end{enumerate}

Condition \eqref{penna} is  \ref{itemtre}. We show that
 \ref{itemuno}--\ref{itemtre} imply \eqref{sailor}.
For   $s\leq \wh s$,  by $\Delta_s \leq
\diam(\gamma_z|_{[0,s]})$ and by
the cone property \eqref{jolly} we have
\begin{equation*}
\begin{aligned}
B(\gamma(s) ,\lambda \Delta_s ) \subset B(\gamma_z(s ),\lambda  
\diam(\gamma_z|_{[0,s]}))\subset\mathrm{epi}(\phi) .
\end{aligned}
\end{equation*}
Then \eqref{sailor} holds with  $\e  = \lambda$. 
The same happens for points $\gamma_\z(s)$ with $s\leq \wh s$. 
Finally, for a point   $\wh\gamma(s^*)$ in the intermediate part, by
\ref{itemdue} we have
\begin{equation}\label{intermedia} 
\begin{aligned}
 \dist  (\wh\gamma(s^*),\mathrm{gr}(\phi) )&\geq \dist(\gamma_z(\wh
s),\mathrm{gr}(\phi) )-\frac{\lambda}{2}\wh s
 \\&
   \geq \lambda \diam(\gamma_z|_{[0,\wh s]})-\frac{\lambda}{2}\diam 
(\gamma_z|_{[0,\wh s]})
   =\frac{\lambda}{2}\diam  (\gamma_z|_{[0,\wh s]}).
\end{aligned}
\end{equation}
 In order to get a lower bound for the last diameter, we use the
length-minimizing property of $\wh \gamma$   and  property
\ref{itemdue}, which give 
\begin{equation*}
\operatorname{diam}(\wh \gamma_{[0, s^*]})\leq d(\wh \gamma(0),\wh\gamma(1))\leq \frac{\lambda}{2}\wh s
\leq    \frac{\lambda}{2}\diam\big( \gamma_z|_{[0,\wh s]}\Big).
\end{equation*}
 Therefore, we have  $\diam\big(\gamma_z|_{[0,\wh s]} + \wh\gamma|_{[0,  s^*]} \big)\leq
2 \diam\big(\gamma_z|_{[0,\wh s]})$
and then it is easy to conclude that  \eqref{sailor}
holds with $\e=  \frac{\lambda}{4} $.

Now we prove \ref{itemuno}. By \eqref{percorrenza} this is implied by 
  $\wh s\leq \frac 12 \bar s  $. 
  By \eqref{essesegnato},
  Case A, \eqref{massimino}, we have 
$
 \wh s\leq H\mu \frac{|ZF(z)|}{|z|^{2m}}=H\mu  \frac{\bar
s}{\e_0}.
$
Thus, we deduce that  \ref{itemuno} holds provided that
  \begin{equation}
  \label{123}
H\mu \leq \frac 12 \e_0.  
  \end{equation} 
  This is the first requirement on $H$ and $\mu$.
This restriction is  
compatible with  further conditions made below.
  
Next we prove  \ref{itemdue}. 
Theorem \ref{eqq} gives
\begin{equation}\label{duedidue} 
\begin{aligned}
 d(\gamma_z(\wh s), \gamma_\z(\wh s))  &
 \leq C_0\abs{\z- z}
    +C_0\min\Big\{ \frac{\abs{\Theta}^{1/2}}{\abs{z}^m }, 
 \abs{\Theta}^{\frac{1}{2m+2}}\Big\}
 \end{aligned}
\end{equation}
  where
\begin{equation*}
\begin{aligned}
 \Theta & = \phi(\z)-\phi( z)+\b -b+\int_0^{\wh s} (\eta|\z_\r |^{2m}  -
 y |z_\r |^{2m} ) d\r 
 +|z_{\wh   s}|^{2m}  \omega(z_{\wh s}, \z_{\wh s}) . 
\end{aligned}
\end{equation*}
 Let  $\Theta= \Theta_1 +\Theta_2 +\Theta_3$, with
\[
\begin{split}
 \Theta _1  & =  \phi(\z)-\phi(z)+ \b-b + |z |^{2m} \omega(z,\z) ,
\\
\Theta_2 &=  |z_{\wh s}|^{2m} \omega(z_{\wh s},\z_{\wh s})  
  -|z|^{2m} \omega(z,\z),
\\
\Theta_3 & =  \int_0^{\wh  s} ( \eta|\z_\r|^{2m}  - y|z_\r
|^{2m} )d\r .
\end{split}
 \]

The first term in the right-hand side of \eqref{duedidue}   can be estimated as
follows 
\[
  C_0\abs{\z-z}\leq C_0 d(p,q) \leq  \frac{\lambda}{8} H d(p,q) =\frac{\lambda}{8} \wh s,
\]
as soon as $H$ is large enough to ensure that  
 \begin{equation}
  \label{zeroreq}
  C_0\leq \frac{\lambda}{8} H,
 \end{equation} 
where $C_0$ is the absolute constant in \eqref{duedidue}. We used definition \eqref{essesegnato}
of $\wh s$. 

Concerning the second term in the right-hand side of \eqref{duedidue}, 
we claim that for all $j=1,2,3$ 
we have  
 \begin{equation}\label{rich} 
C_0\min\Bigl\{ |\Theta _j|^{\frac{1}{2m+2}} , \frac{|\Theta _j |^{1/2}}{|z|^{m}}
\Bigr\}\leq \frac{\lambda}{8} H d(p,q).
 \end{equation}

\noindent 
By Theorem \ref{eqq}, we have  
\[
C_0\min\{|\Theta _1 |^{\frac{1}{2m+2}} ,|\Theta _1 |^{1/2}/|z|^m
\}\leq  C_0 \delta(p,q)\leq C C_0  d(p,q) \leq 
\frac{ \lambda}{8} H d(p,q),
\]
as soon as $H$ is large enough so  that
\begin{equation}
 \label{secondoreq}
C C_0\leq \frac{\lambda}{8} H.
\end{equation} 

To evaluate  the term with $\Theta_2$, we apply the inequalities
\begin{equation*}
\begin{aligned}
|\Theta _2|&  =\Big| \Big( |z_{\wh s}|^{2m} -|z|^{2m}\Big)
\omega(z,\z)  + |z_{\wh s }|^{2m} \wh s(\eta-y)\Big|\leq
C|z|^{2m}|\z-z|\wh s
\\&
\leq C|z|^{2m}d(p,q)\wh s 
=C|z|^{2m} H d(p,q)^2.
\end{aligned}
\end{equation*}
Thus, we deduce that for some absolute constant $C_2>0$ we have
\[ 
 \frac{|\Theta _2|^{1/2}}{|z|^m} \leq  C_2  d(p,q)\sqrt{H} \leq 
C_0^{-1} \frac{\lambda}{8} H d(p,q)
\]
as soon as \begin{equation}
 \label{terzoreq}
 C_2\leq C_0^{-1}\frac{\lambda}{8}\sqrt{H}.
\end{equation} 

Finally, we estimate $\Theta_3$: 
\begin{equation*}
\begin{aligned}
 |\Theta _3| &\leq|y-\eta| \int_0^{\wh s}|z_\r |^{2m}d\r 
+|\eta|  \Big| 
 \int_0^{\wh  s} ( |z_\r |^{2m} - |\z_\r |^{2m}) d\r\Big|
 \\&
    \leq C \wh  s  |z|^{2m}\d(p,q ) +C\wh  s 
|\eta| |z|^{2m-1}\abs{z-\z} 
\\
&
    \leq C \wh  s|z|^{2m} d(p,q) =C |z|^{2m} d(p,q)^2 H,
\end{aligned}
\end{equation*}
and we end up with again with the requirement \eqref{terzoreq}.

To conclude the proof, we choose $H>0$ large enough so that 
\eqref{zeroreq}, \eqref{secondoreq} and \eqref{terzoreq} hold. This implies
\ref{itemdue}. Then we choose   $\mu>0 $ such that  
\eqref{123} holds. This implies  \ref{itemuno}.
The diameter estimate in \ref{itemtre} holds in terms of such
constants and the proof of Case A is concluded.

 \medskip

 \step{Analysis of Case B. } Let us consider the second piece of the curve from
$(z, \phi(z)+b)$,
 \begin{equation*}
  \gamma_z(s)=\Big( z_{\ol s} , t+ b+y\int_0^{\ol s}\abs{z_\r}^{2m }d\r + s-\ol s\Big)\quad\text{for $s\geq \ol s=\e_0\frac{|ZF(z)|}{|z|^{2m}}$.}
 \end{equation*}
Let also   $(\z,\phi(\z)+ \b)$  be such that Case B holds. Then, there is  a unit vector $w=(u,v)\in\R^2$ 
such that the curve $\gamma_\z$ is a
$\lambda$-John curve in $\mathrm{epi}(\phi)$ starting from $(\z, \phi(\z)+\beta) $.
When $s\geq \bar{\bar s} = \e_0 |ZF(\z)| / |\z|^{2m}$, the curve is
\[
\gamma_\z(s)=
 \Bigl( \z+\bar {\bar s}  w , \phi(\z) +\beta+\omega(w,\z)  \int_0^{\bar {\bar
s}} \abs{\z+\rho w}^{2m} d\r + s-  \bar{\bar s} \Bigr).
 \]
Note that the numbers $\bar s$ and $\bar{\bar s}$ could both vanish.
Furthermore, we will  assume without loss of generality that 
$ \operatorname{diam}\big(\gamma_z\big|_{[0,  \wh s_z ]}\big)
 \geq   \operatorname{diam}\big(\gamma_\z\big|_{[0, \wh s_\z ]}\big)$.

For $\tau\geq 0$   consider the points
$\gamma_z(\wh {s_z} )$ and $\g_\z(\wh s_\z)$, where  
\begin{equation*}
\wh s_z =\bar s +\t^{2m+2},\qquad \wh s_\z = \bar{\bar s} +\t^{2m+2}.       
                                                           \end{equation*} 

We claim that there exists  $M>0$ such that for all $p=(z,\phi(z)+b)$ and $ q
=(\z,\phi(\z)+\beta)$ for which Case B holds, if  $\t\geq 0$ satisfies 
\begin{equation}\label{giove} 
  \operatorname{diam}\Big(\gamma_z\big|_{[0,  \wh s_z ]}\Big)=\max\Bigl\{
 \operatorname{diam}\Big(\gamma_z\big|_{[0,  \wh s_z ]}\Big)
 ,  \operatorname{diam}\Big(\gamma_\z\big|_{[0, \wh s_\z ]}\Big)
 \Bigr\}=  M d(p,q),
\end{equation} 
then we have    
\begin{equation}\label{conogelato} 
 \begin{aligned}
d\Big(\g_z(\wh s_z ), & \g_\z(\wh s_\z ) \Big)
\leq  \frac{\la}{2}
 \operatorname{diam}\Big(\gamma_z\Big|_{[0, \wh s_z ]}\Big),
 \end{aligned}
\end{equation} 
 where   $\la$  is the John constant of the curves.

Notice that for any $M, p,q $ there is always a $\t$ such that
\eqref{giove} holds because  the left-hand side
of
\eqref{giove} is increasing in $\t$ and tends to $+\infty$, as $\t\to+\infty$.

We prove the claim. By the  invariance  of the distance with respect to vertical
translations we have 
\begin{equation*}
\begin{aligned}
d\Big(\g_z(\bar s +\t^{2m+2}),& \g_\z(\bar {\bar s}+\t^{2m+2}) \Big)
=
d \big(\g_z(\bar s ), \g_\z(\bar s ) \big)
 \\&\leq 
 d(\g_z(\bar s ), \g_z(0))+ d (\g_z(0),\g_\z(0))+d(\g_\z(\bar{\bar
s}),\g_\z(0) )
 \\&
 \leq \e_0\frac{|ZF(z)|}{\abs{z}^{2m}}+d(p,q)+\e_0
\frac{|ZF(\z)| }{\abs{\z}^{2m}}
 \\&
 \leq \frac{2 \e_0}{\mu}d(p,q)+d(p,q)
 \\&
= \frac{1 }{M}\Big(\frac{2 \e_0}{\mu}+1\Big)
 \max\{\diam(\g_z|_{[0, \wh s_z]}), \diam(\g_\z|_{[0, \wh s_\z]})\}, 
\end{aligned}
\end{equation*}
by \eqref{giove}. Thus \eqref{conogelato} holds  if $M$ is large enough,  and
the claim is proved. 

To conclude the proof,  we show that  the path $\gamma$, given by the
concatenation of    $\gamma_\z\big|_{[0, \wh s_\z ]}$, a length
minimizing path $\wh \gamma $ connecting $\wh \gamma(0)=
\g_z(\wh s_z )$ and $\wh\gamma(1)=\g_\z(\wh s_\z )$ and the
reverse 
of  $\gamma_z\big|_{[0, \wh s_z ]}$ satisfies   
the
$(\e, \d)$-condition.
Since the diameter estimate \eqref{penna} is contained in the claim above, we are left with the proof of~\eqref{sailor}.

Let $q$ be a point of $\gamma$.  If 
$q=\gamma_z(s)$ with $s\leq \wh s_z$ or $q=\gamma_\z(s)$ with $s\leq \wh s_\z$,
then
\eqref{sailor} follows with $\e=\lambda$ from the John property \eqref{jolly}. 
If   
$q=\wh\gamma (s^*)$, then  we argue as in \eqref{intermedia}. precisely
\begin{equation}\label{jelafamo} 
\begin{aligned}
\dist (\wh\gamma(s^*), \operatorname{gr}\phi   ) &\geq 
  \dist (\gamma_z( \wh s_z),   \operatorname{gr}\phi   ) -
d(\wh\gamma(s^*),\gamma_z( \wh s_z)) 
\\& \geq \lambda \operatorname{diam}(\gamma_z|_{[0, \wh s_z]}) -\diam(
\wh \gamma)\geq \frac{\lambda}{2} \operatorname{diam}(\gamma_z|_{[0, \wh s_z]}),
\end{aligned}
\end{equation}
by \eqref{conogelato}. 
Finally, to get a lower estimate of the latter diameter with $\diam (\gamma_z|_{[0, \wh s_z]} + 
\hat \gamma|_{[0, s^*]})$, which will 
give the John property, it suffices to use the length minimizing property of $\wh\gamma $ 
\begin{equation*}
\begin{aligned}
  \diam(\wh\gamma|_{[0, s^*]}) &\leq  d(\wh \gamma(0), \wh \gamma(1)) \leq  
   \frac{\lambda}{2} \operatorname{diam}\big(\gamma_z\big|_{[0, \wh s_z ]}\big).
\end{aligned}
\end{equation*}
Thus, as in Case A, we get the correct lower bound  for the last line of \eqref{jelafamo} and the proof is easily concluded. 
\end{proof}

%

 \section{Bounded admissible domains are uniform}\label{ultimissima} 
In this section we prove Theorems \ref{principale} and   \ref{Ahl} in the
case of a \emph{bounded} $m$-admissible domain.
Now we assume that $m\in\N$ is an integer.

\begin{proof}[Proof of Theorem \ref{principale}]
 Let $\Omega\subset\R^3$ be an $m$-admissible domain. 
By a 
 standard localization argument (see
e.g.~\cite[Proposition~2.5]{MontiMorbidelli05b}), it suffices 
 to show that for all $p_0\in\p\Omega$ there is a  neighborhood $
A_{p_0}$ of $p_0$ in $\R^3$ such that for all $p,q\in A_{p_0}$ there is a 
continuous curve $\gamma:[0,1]\to \Omega\cap A_{p_0}$ satisfying 
 $\gamma(0)= p $ and $\gamma(1)=q$ and such that  \eqref{penna} and
\eqref{sailor} hold.
 
There are two cases:  
 \begin{enumerate}[noitemsep]
 \item \label{bagno}  $p_0$ is a noncharacteristic point, i.e., $\Span\{X(p_0),
Y(p_0)\}$ is not contained in $T_{p_0}\partial\Omega$.
  
 \item $p_0$ \label{cucina} is a characteristic point of $\partial \Omega$. 

 \end{enumerate}

 In Case \ref{bagno}, the claim is proved in 
\cite[Theorem~1.1]{MontiMorbidelli05b}.  
To use this result,  we need a $C^\infty$ boundary and smooth vector fields.
For this reason
we require  $m\in\N$.  

In the Case  \ref{cucina}, in a neighborhood of $p_0$ the 
 boundary of $\Omega$ is a graph of the type $t =
\phi(z)$ for an $m$-admissible function $\phi \in C^\infty(D)$ for some open
set $D\subset \R^2$.
The claim is proved in Sections \ref{CONO} and 
\ref{disconosco}. 
\end{proof}

 \begin{proof}[Proof of Theorem \ref{Ahl}] 
By compactness, we can cover $\p\Omega$ with a finite union of $m$-admissible
graphs, together with a compact subset $K\subset\p\Omega$ containing only
noncharacteristic points. 

At points $p\in K$, the  Ahlfors estimates \eqref{Renzi} is proved
in
\cite[Corollary 1]{MontiMorbidelli02}. To use this result, we need a smooth
boundary and smooth vector fields ($m\in\N$).

On $m$-admissible graphs, the Ahlfors estimate is proved in 
Section~\ref{DiMaio}.  
\end{proof}

 \footnotesize

 \def\cprime{$'$} \def\cprime{$'$}
\providecommand{\bysame}{\leavevmode\hbox to3em{\hrulefill}\thinspace}
\providecommand{\MR}{\relax\ifhmode\unskip\space\fi MR }
\providecommand{\MRhref}[2]{%
  \href{http://www.ams.org/mathscinet-getitem?mr=#1}{#2}
}
\providecommand{\href}[2]{#2}


\normalsize

\end{document}